\def\blfootnote{\gdef\@thefnmark{}\@footnotetext}
\theoremstyle{plain}
\newtheorem*{theorem*}{Theorem}
\newtheorem*{thma}{Theorem A}
\newtheorem*{thmb}{Theorem B}
\newtheorem*{proofa}{Proof of Theorem A}
\newtheorem*{proofb}{Proof of Theorem B}
\newtheorem{theorem}{Theorem}[section]
\newtheorem{lemma}[theorem]{Lemma}
\theoremstyle{remark}
\newtheorem{remark}[theorem]{Remark}
\theoremstyle{Acknowledgments}
\theoremstyle{definition}
\def\mod{{\rm Mod}}
\begin{document}
\blfootnote{\textup{2000} \textit{Mathematics Subject Classification}:
57M07, 20F05, 20F38}
\blfootnote{\textit{Keywords}:
Mapping class groups, punctured surfaces, involutions, generating sets}
\newenvironment{prooff}{\medskip \par \noindent {\it Proof}\ }{\hfill
$\square$ \medskip \par}
    \def\sqr#1#2{{\vcenter{\hrule height.#2pt
        \hbox{\vrule width.#2pt height#1pt \kern#1pt
            \vrule width.#2pt}\hrule height.#2pt}}}
    \def\square{\mathchoice\sqr67\sqr67\sqr{2.1}6\sqr{1.5}6}
\def\pf#1{\medskip \par \noindent {\it #1.}\ }
\def\endpf{\hfill $\square$ \medskip \par}
\def\demo#1{\medskip \par \noindent {\it #1.}\ }
\def\enddemo{\medskip \par}
\def\qed{~\hfill$\square$}

 \title[On the Involution Generators of $\mod(\Sigma_{g,p})$] {On the Involution Generators of the Mapping Class Group of a Punctured Surface}

\author[T{\"{u}}l\.{i}n Altun{\"{o}}z,       Mehmetc\.{i}k Pamuk, and O\u{g}uz Y{\i}ld{\i}z ]{T{\"{u}}l\.{i}n Altun{\"{o}}z,    Mehmetc\.{i}k Pamuk, and O\u{g}uz Y{\i}ld{\i}z}

\address{Department of Mathematics, Middle East Technical University and Faculty of Engineering, Ba\c{s}kent University, Ankara, Turkey} 
\email{atulin@metu.edu.tr} 
\address{Department of Mathematics, Middle East Technical University,
 Ankara, Turkey}
 \email{mpamuk@metu.edu.tr}
 \address{Department of Mathematics, Middle East Technical University,
 Ankara, Turkey}
  \email{oguzyildiz16@gmail.com}


\begin{abstract}
Let $\mod(\Sigma_{g,p})$ denote the mapping class group of a connected orientable surface of genus $g$ with $p$ punctures. 
For every even integer $p\geq 10$ and $g\geq 14$, we prove that $\mod(\Sigma_{g, p})$ can be generated by three involutions. 
If the number of punctures $p$ is odd and $\geq 9$, we show that $\mod(\Sigma_{g, p})$ for $g\geq 13$ can be generated by four involutions. 
Moreover, we show that for an even integer $p \geq 4$ and $3\leq g \leq 6$, $\mod(\Sigma_{g,p})$ can be generated by four involutions.
\end{abstract}
\maketitle
  \setcounter{secnumdepth}{2}
 \setcounter{section}{0}
 
\section{Introduction}
Let $\Sigma_{g,p}$ denote a connected orientable surface of genus-$g$ with $p$-punctures, when $p=0$ we write  $\Sigma_{g}$. 
The \textit{mapping class group} of $\Sigma_{g,p}$ is the group of isotopy classes of orientation-preserving homeomorphisms of $\Sigma_{g,p}$ preserving the set of punctures. 

Here is a brief history of generating sets for $\mod(\Sigma_{g,p})$:
Dehn~\cite{de} showed that  $\mod(\Sigma_{g})$ can be generated by $2g(g-1)$ Dehn twists. About a quarter century later, Lickorish~\cite{l3} gave a generating set  consisting of $3g-1$ Dehn twists. 
Later, Humphries~\cite{H} reduced the number of Dehn twist generators to $2g+1$. He also proved that  the number $2g+1$ is minimal for $g\geq 2$. 
Johnson~\cite{j} proved that  the same set of Dehn twists also generates $\mod(\Sigma_{g,1})$. 
In the presence of multiple punctures, Gervais~\cite{G} proved that $\mod(\Sigma_{g,p})$ can be generated by $2g+p$ Dehn twists for $p\geq1$.

If it is not required that the generators are Dehn twists, then it is possible to obtain smaller generating sets for $\mod(\Sigma_{g,p})$:   
For $g\geq 3$ and $p=0$, Lu~\cite[Theorem~$1.3$]{lu} proved that  
$\mod(\Sigma_{g})$ can be generated by three elements.
For $g\geq 1$ and $p=0$ or $1$, 
a minimal (since the group is not abelian) generating set of two elements, a product of two Dehn twists  and a product of $2g$ Dehn twists,   
was first given by Wajnryb~\cite{w}.  Korkmaz~\cite[Theorem~$5$]{mk2} improved this result by showing that one of these two generators can be taken as a Dehn twist. 
He also showed that this group is generated by two elements of finite order ~\cite[Theorem~$14$]{mk2}.  
For $g\geq 3$, Kassabov obtained a generating set of  involution elements where the number of generators depends on $g$ and the parity of $p$  (see ~\cite[Theorem~1]{ka}).
Later, Monden~\cite{m1} removed the parity condition on $p$ for $g=7$ and $g=5$. For $g\geq1$ and $p\geq 2$, Monden~\cite{m2} also gave a generating set for $\mod(\Sigma_{g,p})$ 
consisting of three elements.  Recently, he~\cite{m3} gave a minimal generating set for $\mod(\Sigma_{g,p})$ containing two elements for $g\geq3$.

Note that any infinite group generated by two involutions must be isomorphic to the infinite dihedral group whose subgroups are either cyclic or dihedral.  Since 
$\mod(\Sigma_{g,p})$ contains nonabelian free groups, it cannot be generated by two involutions.  In this paper, we obtain the following result (cf. ~\cite[Remark~5]{ka}):

\begin{thma}\label{thma}
For every even integer $p\geq 8$ and $g\geq 14$, $\mod(\Sigma_{g,p})$ can be generated by three involutions.
Moreover, for every even integer $p\geq 4$ and for $g=3, 4, 5$ or $6$,  $\mod(\Sigma_{g,p})$ can be generated by four involutions.
\end{thma}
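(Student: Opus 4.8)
The plan is to realize a small, well-understood generating set of $\mod(\Sigma_{g,p})$ as products of a few involutions, using the even number of punctures to supply the needed symmetries. I would work with a symmetric model of $\Sigma_{g,p}$: embed the surface in $\R^3$ so that the genus is carried by a chain of handles arranged in a row and the $p$ punctures are placed in symmetric pairs (this is exactly where $p$ even is essential). On such a model there is an orientation-preserving half-turn $J$; because $p$ is even, $J$ can be chosen to permute the punctures in pairs, so it is a genuine element of $\mod(\Sigma_{g,p})$, not merely of the extended group.

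The algebraic engine is the elementary observation that if $W$ is any element and $J$ an involution with $JWJ^{-1}=W^{-1}$, then $WJ$ is again an involution and $W=(WJ)J$ exhibits $W$ as a product of two involutions. Crucially, if a single involution $J$ inverts two elements $W$ and $Z$ simultaneously, then $\langle W,Z,J\rangle=\langle WJ,\,J,\,JZ\rangle$ is generated by the three involutions $WJ,\,J,\,JZ$. Thus my target is to find two periodic mapping classes $W,Z$ (rotations of the symmetric model) together with the half-turn $J$ inverting both, so that $\langle W,Z,J\rangle=\mod(\Sigma_{g,p})$. To verify this generation statement I would begin from the Humphries--Gervais Dehn-twist generators and check that conjugating a single twist by the rotations $W,Z$ sweeps out the entire chain of twists, while the interaction of $J$ with the rotations produces the point-pushing and puncture-swapping maps. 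This reduces the problem to the known statement that these Dehn twists together with the point-pushes generate $\mod(\Sigma_{g,p})$, invoking the generating sets of Gervais and Korkmaz--Monden recalled above.

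The hinge of the argument, and the place where the lower bounds $g\geq 14$ and $p\geq 8$ enter, is this simultaneous-realization step: one must fit a chain of curves long enough to capture the full genus-$g$ twist generators together with the symmetric puncture configuration, so that one and the same orientation-preserving involution $J$ inverts both rotations $W$ and $Z$, and so that no relations among the symmetries collapse $\langle W,Z,J\rangle$ to a proper subgroup. I expect the main obstacle to be precisely this compatibility: proving that the subgroup generated is everything, and not merely a large symmetric subgroup, forces the rotation orders and the puncture pairing to be chosen in a coordinated way, and the genus must be large enough for the two rotation axes to coexist with the half-turn.

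Finally, for the small-genus range $g\in\{3,4,5,6\}$ with $p\geq 4$ even, the symmetric model above has too few handles to support two mutually $J$-inverted rotations, so the reduction to three involutions breaks down. Here I would keep four involutions, using two separate inversion pairings—one adapted to the genus part and one adapted to the puncture part—so that the generating set splits into two inversion-compatible families, each handled by its own pair. The verification is the same in spirit, namely that conjugation by the resulting periodic elements sweeps out the Dehn-twist generators, but it is now carried out case by case for the four small values of $g$, which is feasible precisely because the surfaces are small.
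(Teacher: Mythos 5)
Your algebraic engine (if $JWJ^{-1}=W^{-1}$, then $WJ$ is an involution and $\langle W,Z,J\rangle=\langle WJ,J,JZ\rangle$) is indeed the mechanism the paper uses, but the way you propose to apply it cannot work: you ask for \emph{both} $W$ and $Z$ to be periodic mapping classes realized as rotations of the symmetric model, inverted by a half-turn $J$ of the same model. Any collection of mapping classes realized simultaneously as rigid symmetries of a single embedded model in $\R^3$ preserves the induced Riemannian metric, so the subgroup they generate lies in the image of the isometry group of that metric, which is finite for a closed surface of genus at least $2$. Hence $\langle W,Z,J\rangle$ would be a finite group, never $\mod(\Sigma_{g,p})$; this is not a matter of choosing $g$ large enough or coordinating the rotation axes, and no case analysis can repair it. Relatedly, your verification plan (``conjugating a single twist by the rotations sweeps out the chain of twists'') has nothing to conjugate: a group generated by symmetries of the model contains no Dehn twist at all, and the proposal never explains how one would arise. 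Your four-involution plan for $g\in\{3,4,5,6\}$ inherits the same defect, since its generators are again all symmetries of the model.

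The missing idea --- and the crux of the paper's proof --- is that at least one involution must \emph{not} be a symmetry of the model: the paper takes $\rho_1,\rho_2$ to be half-turns with $R=\rho_1\rho_2$ the cyclic rotation, and the third generator to be $\rho_1F_1$, where $F_1$ is an explicit product of Dehn twists and half-twists about punctures (for $g=2k$, $p=2b$ it is $H_{b-1,b}H_{b+1,b}^{-1}C_{k-3}B_{k-1}A_kA_{k+2}^{-1}B_{k+3}^{-1}C_{k+4}^{-1}$) whose defining curves and arcs are permuted by $\rho_1$ exactly so that $\rho_1F_1\rho_1=F_1^{-1}$, making $\rho_1F_1$ an involution that is a symmetry \emph{composed with twists}. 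Then $F_1=\rho_1(\rho_1F_1)$ lies in the subgroup $\Gamma$, and a chain of explicit conjugations of $F_1$ by powers of $R$ and by elements already produced yields the twist differences $B_iC_i^{-1}$, $C_iB_{i+1}^{-1}$, $A_iB_i^{-1}$, hence all $A_i,B_i,C_i$ by Korkmaz's theorem that $R$, $A_1A_2^{-1}$, $B_1B_2^{-1}$, $C_1C_2^{-1}$ generate. Finally, one must still handle the punctures, a step your proposal leaves essentially unaddressed: the paper shows $\Gamma\supseteq\mod_0(\Sigma_{g,p})$ by producing the twists $E_{1,j}$ about curves surrounding punctures, and then shows $\Gamma$ surjects onto $Sym_p$ (the image of $R$ is a $p$-cycle and the image of $H_{1,2}H_{3,2}^{-1}$ is a $3$-cycle, which generate $Sym_p$ for $p$ even), concluding via the exact sequence $1\to\mod_0(\Sigma_{g,p})\to\mod(\Sigma_{g,p})\to Sym_p\to1$.
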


At the end of the paper, we also show that Theorem~A also 
holds for the cases $p=2$ or $p=3$.  For surfaces with odd number of punctures, we have the following result:

\begin{thmb}\label{thmb}
For every odd integer $p\geq 9$ and $g\geq 13$, $\mod(\Sigma_{g, p})$ is generated by four involutions.
Moreover, for every odd integer $p\geq 5$ and for $g=3, 4, 5$ or $6$,  $\mod(\Sigma_{g,p})$ can be generated by five involutions.
\end{thmb}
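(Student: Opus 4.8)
The plan is to realize $\mod(\Sigma_{g,p})$ as generated by a very small set of finite-order elements and then to split each of them as a product of involutions, reusing involutions between the pieces to keep the total number as low as possible. The first step is to fix a model of $\Sigma_{g,p}$ carrying two compatible symmetries: an $n$-fold rotation $R$ (with $n$ tied to $g$ and to a cyclic arrangement of the punctures) and a perpendicular-axis involution $\iota$ of hyperelliptic type, so that $R$ and $\iota$ generate a dihedral group with $\iota R\iota=R^{-1}$. On such a model I would produce, following the Korkmaz--Monden circle of ideas, a generating set of $\mod(\Sigma_{g,p})$ consisting of two finite-order elements: a rotation $F_1$ built from $R$ (cyclically permuting the punctures) and a second finite-order element $F_2$, chosen so that $\langle F_1,F_2\rangle=\mod(\Sigma_{g,p})$.

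The engine of the argument is the elementary fact that if $f$ is any mapping class admitting an involution $\tau$ with $\tau f\tau=f^{-1}$, then $\tau f$ is again an involution and $f=\tau\,(\tau f)$ is a product of two involutions. Applying this to $F_1$ and $F_2$ with suitable dihedral-type involutions gives $F_1=\sigma_1\sigma_2$ and $F_2=\sigma_3\sigma_4$, and hence a bound of four involutions. To sharpen this I would choose the two symmetries so that the same involution serves in both decompositions, i.e.\ $\sigma_2=\sigma_3=\iota$, giving $F_1=\sigma_1\iota$ and $F_2=\iota\sigma_4$ and therefore $\mod(\Sigma_{g,p})=\langle\sigma_1,\iota,\sigma_4\rangle$ from three involutions; this shared-involution mechanism is exactly what produces the three-involution count for even $p$ in Theorem~A.

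The new feature for odd $p$ is that the reflection used to split a rotation of the $p$ cyclically arranged punctures must fix exactly one puncture, since a reflection of an odd number of symmetrically placed points has a single fixed point. This broken symmetry obstructs full sharing: the half-twist moving the unpaired puncture is not expressible through the pair $\{\sigma_1,\iota\}$ alone, so I would supply a fourth involution $\rho$ realizing the missing puncture move, arranged to share an involution with the others (splitting $F_2$ as $\iota\sigma_4$ and the extra generator as $\sigma_4\rho$), yielding the four involutions $\sigma_1,\iota,\sigma_4,\rho$ asserted by the theorem. The main obstacle, and the bulk of the work, is verifying genuine generation: one must exhibit each element of a known generating set (the Humphries--Gervais twists together with the puncture half-twists) as a word in these involutions, and one must check that each symmetry written down really is an orientation-preserving involution of the \emph{punctured} surface, with the correct local behaviour at the fixed puncture. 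The hypothesis $g\geq 13$ enters precisely to guarantee that the symmetric model has enough genus to carry $F_1$, $F_2$ and the extra puncture move with the required shared involutions.

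For the low-genus cases $g=3,4,5,6$ there is not enough genus to place all of this data with a common involution, so the economical sharing breaks down and I would fall back on a less efficient decomposition in which one of the rotations is split by a disjoint pair of involutions; this costs a single extra generator and accounts for the five involutions in the second statement. I expect these cases to require an explicit, genus-by-genus choice of curves and symmetries, and I expect the verification that five involutions suffice to be the most computational part of the proof.
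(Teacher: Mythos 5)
Your proposal reproduces the easy half of the paper's argument --- the dihedral trick (if $\tau f\tau=f^{-1}$ for an involution $\tau$, then $f=\tau\cdot(\tau f)$ is a product of two involutions) and the idea of sharing involutions on a symmetric model --- but it rests on an input that you never supply and that is not available off the shelf: a generating set of $\mod(\Sigma_{g,p})$ consisting of two finite-order elements adapted to the symmetries of the model. Korkmaz's two-torsion-generator theorem covers only $p\leq 1$, and no analogue for $p\geq 2$ punctures (with the required compatibility $\iota F_i\iota=F_i^{-1}$) existed to be cited. Obtaining such a statement is essentially as hard as the theorem you are trying to prove, so assuming it is circular in effect. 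The paper avoids this entirely: it never needs a torsion generating set, and in fact the element it splits is \emph{infinite} order --- a product of Dehn twists and half-twists such as $F_1=H_{b-1,b}H_{b+1,b}^{-1}C_{k-3}B_{k-1}A_kA_{k+2}^{-1}B_{k+3}^{-1}C_{k+4}^{-1}$, inverted by $\rho_1$ so that $\rho_1F_1$ is an involution. The actual content of the paper is then the verification you defer as ``the bulk of the work'': explicit Korkmaz-style conjugation computations (Lemmas~\ref{lemeven}--\ref{lem3}) showing that $\langle\rho_1,\rho_2,\rho_1F_1\rangle$ contains every $A_i$, $B_i$, $C_i$, then that it contains $\mod_0(\Sigma_{g,p})$ (Lemma~\ref{lemma4}), and finally an application of the exact sequence $1\to\mod_0(\Sigma_{g,p})\to\mod(\Sigma_{g,p})\to Sym_p\to 1$ via Lemma~\ref{lemma1}. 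Without carrying out these computations (or citing a substitute), your outline does not constitute a proof.

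Two further points where your plan diverges from what actually makes the theorem work. First, your explanation of why odd $p$ costs an extra involution (``the half-twist moving the unpaired puncture is not expressible'') misses the concrete obstruction, which lives entirely in $Sym_p$: for even $p$ the images of $R=\rho_1\rho_2$ and of $H_{1,2}H_{3,2}^{-1}$ are a $p$-cycle and a $3$-cycle, which generate $Sym_p$ by Isaacs--Zieschang, whereas for odd $p$ both of these are even permutations, so the image can be trapped in $A_p$. The fourth involution $\rho_3$ is added precisely so that the images of $\rho_1,\rho_2,\rho_3$ generate $Sym_p$ (Monden's Lemma~6); the subgroup $\mod_0(\Sigma_{g,p})$ is already obtained from the other three generators, with no change to those arguments. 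Second, the five involutions in the low-genus cases do not arise from ``splitting a rotation by a disjoint pair'': for $3\leq g\leq 6$ the chain of curves $c_{k-3},\dots,c_{k+4}$ used in the single twist-product does not fit on the surface, so the paper needs \emph{two} separate twist-products $\rho_2F_1$ and $\rho_1E_1$ (Lemmas~\ref{lem6}--\ref{lem3}), giving four involutions in Theorem~A, and the fifth is again just $\rho_3$ for the $Sym_p$ surjection.
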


The paper is organized as follows: In Section~\ref{S2}, we quickly provide the necessary background on mapping class groups. 
The proofs of Theorem~A  and Theorem~B  are given in Section~\ref{S3}.

\medskip

\noindent
{\bf Acknowledgements.}
This work was supported by the Scientific and Technological Research Council of Turkey (T\"{U}B\.{I}TAK)[grant number 120F118].


\par  
\section{Background and Results on Mapping Class Groups} \label{S2}

 Let $\Sigma_{g,p}$ denote a connected orientable surface of genus $g$ with $p$ punctures specified by the set $P=\lbrace z_1,z_2,\ldots,z_p\rbrace$ of $p$ distinguished points. If $p$ is zero then we omit it from the notation and write $\Sigma_{g}$. {\textit{The mapping class group}} 
 $\mod(\Sigma_{g,p})$ of the surface $\Sigma_{g,p}$ is defined to be the group of the isotopy classes of orientation preserving
 self-diffeomorphisms of $\Sigma_{g,p}$ which fix the set $P$. {\textit{The mapping class group}} $\mod(\Sigma_{g,p})$ of the surface $\Sigma_{g,p}$ is defined to be the group of isotopy classes of all orientation preserving self-diffeomorphisms of $\Sigma_{g,p}$ which fix the set $P$. Let $\mod_{0}(\Sigma_{g,p})$ denote the subgroup of $\mod(\Sigma_{g,p})$ consisting of elements which fix the set $P$ pointwise. It is obviuos that we have the following exact sequence:
 \[
1\longrightarrow \mod_{0}(\Sigma_{g,p}) \longrightarrow \mod(\Sigma_{g,p}) \longrightarrow Sym_{p}\longrightarrow 1,
\]
where $Sym_p$ denotes the symmetric group on the set $\lbrace1,2,\ldots,p\rbrace$ and the last projection is given by the restriction of the isotopy class of a diffeomorphism to its action on the punctures. \par
Let $\beta_{i,j}$ be an embedded arc that joins two punctures $z_i$ and $z_j$ and does not intersect $\delta$ on $\Sigma_{g,p}$. 
Let $D_{i,j}$ denote a closed regular neighbourhood of $\beta_{i,j}$, which is a disk with two punctures. 
There exists a diffeomorphism $H_{i,j}: D_{i,j} \to D_{i,j}$, which interchanges the punctures such that $H_{i,j}^{2}$ is equal to the right handed Dehn twist about $\partial D_{i,j}$ and is the identity on the complement of the interior of $D_{i,j}$. Such a diffeomorphism is said to be \textit{the (right handed) half twist} about $\beta_{i,j}$. It can be extended to a diffeomorphism of $\mod(\Sigma_{g,p})$. Throughout the paper we do not distinguish a 
 diffeomorphism from its isotopy class. For the composition of two diffeomorphisms, we
use the functional notation; if $f$ and $g$ are two diffeomorphisms, then
the composition $fg$ means that $g$ is applied first and then $f$.\\
\indent
 For a simple closed 
curve $a$ on $\Sigma_{g,p}$, following ~\cite{apy,mk1}, we denote the right-handed 
Dehn twist $t_a$ about $a$ by the corresponding capital letter $A$.
Let us also remind the following basic facts of Dehn twists that we use frequently throughout  the paper: Let $a$ and $b$ be 
simple closed curves on $\Sigma_{g,p}$ and $f\in \mod(\Sigma_{g,p})$.
\begin{itemize}
\item  If $a$ and $b$ are disjoint, then $AB=BA$ (\textit{Commutativity}).
\item If $f(a)=b$, then $fAf^{-1}=B$ (\textit{Conjugation}).
\end{itemize}
Let us finish this section by noting that we  denote the conjugation relation $fgf^{-1}$ by $f^{g}$ for any $f,g \in \mod(\Sigma_{g,p})$.

\section{Involution generators for $\mod(\Sigma_{g,p})$}\label{S3}
Let us start this section by recalling the following set of generators given by Korkmaz~\cite[Theorem~$5$]{mk1}.

\begin{theorem}\label{thm1}
If $g\geq3$, then the mapping class group $\mod(\Sigma_g)$ can be generated by the four elements $R$, $A_1A_{2}^{-1}$, $B_1B_{2}^{-1}$, $C_1C_{2}^{-1}$.
\end{theorem}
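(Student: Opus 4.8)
The plan is to prove that the subgroup $H=\langle R,\ A_1A_2^{-1},\ B_1B_2^{-1},\ C_1C_2^{-1}\rangle$ contains Humphries' generating set of $2g+1$ Dehn twists, whence $H=\mod(\Sigma_g)$. Here $R$ is the mapping class of a rotational symmetry of $\Sigma_g$, and the curves $a_i,b_i,c_i$ are chosen so that, by the conjugation rule $Rt_xR^{-1}=t_{R(x)}$, conjugation by $R$ cyclically permutes the three families, $R(a_i)=a_{i+1}$, $R(b_i)=b_{i+1}$, $R(c_i)=c_{i+1}$. The first step is simply to read off these conjugation relations from the chosen picture of $\Sigma_g$, together with the intersection pattern of the curves (which pairs are disjoint, so the twists commute, and which meet once).

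With these relations recorded, I would generate all pairwise differences inside each family. Conjugating by powers of $R$ gives $R^{k}(A_1A_2^{-1})R^{-k}=A_{1+k}A_{2+k}^{-1}\in H$ for every $k$, and consecutive differences telescope as group elements,
\[
(A_iA_{i+1}^{-1})(A_{i+1}A_{i+2}^{-1})=A_iA_{i+2}^{-1},
\]
so that $A_iA_j^{-1}\in H$ for all $i,j$; the same computation yields every $B_iB_j^{-1}$ and every $C_iC_j^{-1}$.

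The next step is to tie the three families together and to bring the rotation in as a word in Dehn twists. Because the curves sit in the standard symmetric configuration underlying a Humphries system, the chain and lantern relations both express $R$ as an explicit product of the twists $A_i,B_i,C_i$ and provide cross-relations letting a mixed difference such as $A_iB_j^{-1}$ be rewritten in terms of elements already known to lie in $H$. Substituting the twist-word for $R$ into this pool of difference elements then cancels all factors but one and isolates a single Dehn twist, say $A_1\in H$. Once one genuine twist lies in $H$, all of its conjugates $A_i=R^{\,i-1}A_1R^{-(i-1)}$ do as well, and the differences recover each $B_i$ and $C_i$; these exhaust a Humphries system, so $H=\mod(\Sigma_g)$.

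I expect the third step, the extraction of a single Dehn twist, to be the main obstacle. The difference elements only determine the twists $A_i$ relative to one another, so isolating one of them absolutely is impossible from the differences alone: the rotation must contribute, and not merely through its permutation action but through its precise realization as a product of twists via the chain relation for the symmetric chain. Carrying this out — verifying the conjugation and intersection relations, choosing the chain to which the chain relation is applied, and checking that the twists one finally obtains really form a Humphries system — is where the specific geometry of the configuration has to be handled with care.
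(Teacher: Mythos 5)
You should first be aware that the paper itself contains no proof of Theorem~\ref{thm1}: it is recalled verbatim from Korkmaz \cite[Theorem~5]{mk1} and used as a black box (each of Lemmas~\ref{lemeven}--\ref{lem3} ends by exhibiting $A_1A_{2}^{-1}$, $B_1B_{2}^{-1}$, $C_1C_{2}^{-1}$ inside $\Gamma$ and then citing Theorem~\ref{thm1}), so your proposal has to be measured against Korkmaz's argument, whose computational style the paper's own lemmas imitate. Your first two steps coincide with the opening of that argument: record the $R$-action on the curves, conjugate the given differences by powers of $R$, and telescope to obtain every within-family difference $A_iA_j^{-1}$, $B_iB_j^{-1}$, $C_iC_j^{-1}$. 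The genuine gap is your third step. The mechanism you propose --- write $R$ as a word in the twists via a chain relation and then ``substitute the twist-word for $R$ into this pool of difference elements'' so that ``all factors but one cancel'' --- is not an argument: the difference elements contain no occurrence of $R$ to substitute into, and knowing that the element $R\in H$ equals some word in the twists yields no cancellation scheme and does not force any factor of that word to lie in $H$ individually. In the actual proof $R$ is never expressed as a product of Dehn twists at all; it contributes solely through its conjugation action on curves.

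Moreover, your diagnosis that ``isolating one twist absolutely is impossible from the differences alone'' is exactly backwards, and this is the key idea you are missing: the lantern relation converts differences into a single twist. If $x_0,x_1,x_2,x_3$ bound an embedded four-holed sphere with interior lantern curves $y_1,y_2,y_3$, and all seven curves are nonseparating (possible since $g\geq 3$), then, because each boundary twist commutes with all seven twists, the relation $t_{x_0}t_{x_1}t_{x_2}t_{x_3}=t_{y_1}t_{y_2}t_{y_3}$ gives
\[
t_{x_0}=\bigl(t_{y_1}t_{x_1}^{-1}\bigr)\bigl(t_{y_2}t_{x_2}^{-1}\bigr)\bigl(t_{y_3}t_{x_3}^{-1}\bigr),
\]
a product of three differences of twists about nonseparating curves. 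What the real proof does between your step two and this endgame is to manufacture cross-family differences such as $A_iB_i^{-1}$, $B_iC_i^{-1}$, $C_iB_{i+1}^{-1}$ --- not from chain or lantern relations, but from explicit conjugations of the given generators by words in one another, using the rule $fAf^{-1}=B$ when $f(a)=b$ together with the computed action of those words on the curve system; this is precisely the kind of computation that fills the proofs of Lemmas~\ref{lemeven}--\ref{lem3} of this paper (for instance $F_3=F_1^{F_1F_2^{-1}}$ and $F_5F_3^{-1}=B_{k-3}C_{k-3}^{-1}$). Your proposal supplies neither these cross-family differences (asserting, incorrectly, that chain and lantern relations ``provide'' them) nor the lantern extraction step, so as it stands it proves only that $H$ contains the within-family differences, which is far short of the theorem.
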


Let us also recall the following well known result from algebra.
\begin{lemma}\label{lemma1}
Let $G$ and $K$ be groups. Suppose that the following short exact sequence holds,
\[
1 \longrightarrow N \overset{i}{\longrightarrow}G \overset{\pi}{\longrightarrow} K\longrightarrow 1.
\]
Then the subgroup $\Gamma$ contains $i(N)$ and has a surjection to $K$ if and only if $\Gamma=G$.
\end{lemma}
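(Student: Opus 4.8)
The plan is to establish the biconditional by proving each implication separately, since the only genuine content lies in the forward direction and rests entirely on exactness of the sequence. First I would dispose of the easy implication: if $\Gamma = G$, then trivially $i(N)\subseteq G = \Gamma$, and $\pi(\Gamma) = \pi(G) = K$ because $\pi$ is surjective (exactness at $K$). So a subgroup equal to $G$ automatically satisfies both stated conditions, and nothing further is needed here.

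For the substantive direction, I would assume that $\Gamma$ is a subgroup with $i(N)\subseteq \Gamma$ and $\pi(\Gamma) = K$, and show $\Gamma = G$ by proving $G \subseteq \Gamma$ (the reverse inclusion being immediate). The key step is a standard coset-lifting argument: take an arbitrary $g\in G$; since $\pi$ restricted to $\Gamma$ is still surjective onto $K$, there is some $\gamma \in \Gamma$ with $\pi(\gamma) = \pi(g)$. Then $\pi(\gamma^{-1}g) = \pi(\gamma)^{-1}\pi(g) = 1$, so $\gamma^{-1}g \in \ker \pi$. By exactness at $G$ we have $\ker\pi = i(N)$, hence $\gamma^{-1}g \in i(N) \subseteq \Gamma$. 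Writing $g = \gamma\,(\gamma^{-1}g)$ as a product of two elements of $\Gamma$ then gives $g \in \Gamma$, and since $g$ was arbitrary we conclude $\Gamma = G$.

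There is essentially no obstacle to overcome: the entire argument is the observation that a subgroup meeting every $\pi$-fiber (surjectivity onto $K$) and containing one full fiber, namely $\ker\pi = i(N)$ (containment of $i(N)$), must contain every fiber and hence all of $G$. The single place where exactness is used is the identification $\ker\pi = i(N)$, which converts the hypothesis $i(N)\subseteq\Gamma$ into the statement that $\Gamma$ contains the whole kernel; I would flag this as the crucial, though elementary, point. This lemma is precisely the tool that will later let us verify $\Gamma = \mod(\Sigma_{g,p})$ by checking only that a candidate set of involutions generates a subgroup containing $\mod_0(\Sigma_{g,p})$ and surjecting onto $Sym_p$.
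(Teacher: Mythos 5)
Your proof is correct: the coset-lifting argument (pick $\gamma\in\Gamma$ with $\pi(\gamma)=\pi(g)$, note $\gamma^{-1}g\in\ker\pi=i(N)\subseteq\Gamma$, hence $g\in\Gamma$) is exactly the standard argument, and you correctly interpret ``has a surjection to $K$'' as $\pi(\Gamma)=K$. The paper itself states this lemma as a well-known fact and gives no proof, so there is nothing to compare against; your write-up supplies precisely the argument the authors are implicitly invoking.
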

\par

In our case where $G=\mod(\Sigma_{g,p})$ and $N=\mod_{0}(\Sigma_{g,p})$, 
we have the following short exact sequence:
\[
1\longrightarrow \mod_{0}(\Sigma_{g,p})\longrightarrow \mod(\Sigma_{g,p}) \longrightarrow Sym_{p}\longrightarrow 1.
\]
Therefore, we obtain the following useful result which follows immediately from Lemma~\ref{lemma1}. Let $\Gamma$ be a subgroup of $\mod(\Sigma_{g,p})$. If the subgroup $\Gamma$ contains $\mod_{0}(\Sigma_{g,p})$ and has a surjection to $Sym_p$ then $\Gamma=\mod(\Sigma_{g,p})$.

\begin{figure}[hbt!]
\begin{center}
\scalebox{0.3}{\includegraphics{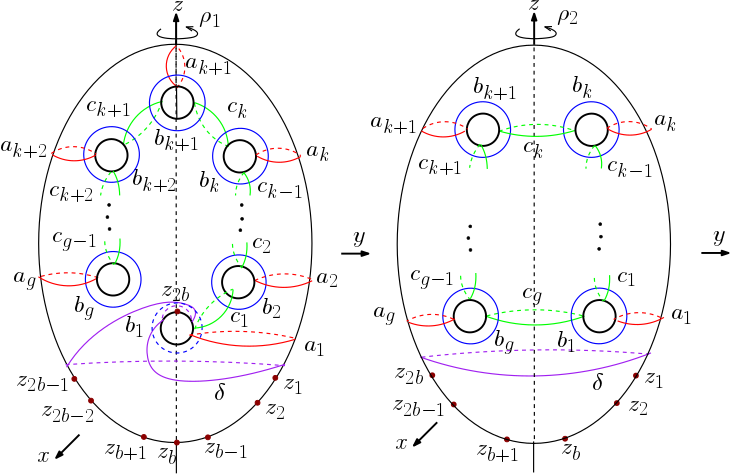}}
\caption{The involutions $\rho_1$ and $\rho_2$ if $g=2k$ and $p=2b$.}
\label{EE}
\end{center}
\end{figure}

Throughout the paper, we consider the embeddings of $\Sigma_{g,p}$ into $\mathbb{R}^{3}$ in such a way that it is invariant under the rotations 
$\rho_1$ and $\rho_2$.  Here, $\rho_1$ and $\rho_2$ are the rotations by $\pi$ about the $z$-axis  (see Figures~\ref{EE} and~\ref{OE}). 
Note that $\mod(\Sigma_{g,p})$ contains the element $R=\rho_1\rho_2$ which satisfies the following:
\begin{itemize}
\item [(i)]   $R(a_i)=a_{i+1}$, $R(b_i)=b_{i+1}$ for $i=1,\ldots,g-1$ and $R(b_g)=b_{1}$,
\item [(ii)]  $R(c_i)=c_{i+1}$ for $i=1,\ldots,g-2$,
\item [(iii)] $R(z_1)=z_p$ and $R(z_i)=z_{i-1}$ for $i=2,\ldots,p$.
\end{itemize}

We want to note here that, in the following lemmata, where we present generating sets for surfaces with even number of punctures, we mainly follow the proof 
of \cite[Theorem~$5$]{mk1}.   We use them in the proof of Theorem A and then for surfaces with odd number of punctures we explain how our arguments are modified.

\begin{lemma}\label{lemeven}
For every even integer $g=2k\geq14$ and every even integer $p=2b\geq 10$, the subgroup of $\mod(\Sigma_{g,p})$ generated by the elements 
\[
\rho_1, \rho_2 \textrm{ and }\rho_1H_{b-1,b}H_{b+1,b}^{-1}C_{k-3}B_{k-1}A_k
A_{k+2}^{-1}B_{k+3}^{-1}C_{k+4}^{-1}\]
contains the Dehn twists $A_i$, $B_i$ and $C_i$  for $i=1,\ldots,g$.
\end{lemma}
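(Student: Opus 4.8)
The plan is to work inside the subgroup $\Gamma=\langle \rho_1,\rho_2,F\rangle$, where $F=\rho_1H_{b-1,b}H_{b+1,b}^{-1}C_{k-3}B_{k-1}A_kA_{k+2}^{-1}B_{k+3}^{-1}C_{k+4}^{-1}$ is the third generator. Since $\rho_1$ and $\rho_2$ are involutions, the rotation $R=\rho_1\rho_2$ lies in $\Gamma$, and because $\rho_1^{2}=1$ we also obtain
\[
G:=\rho_1 F=H_{b-1,b}H_{b+1,b}^{-1}C_{k-3}B_{k-1}A_kA_{k+2}^{-1}B_{k+3}^{-1}C_{k+4}^{-1}\in\Gamma .
\]
By the conjugation rule together with conditions (i)--(iii), conjugation by $R$ shifts the genus twists upward, $RA_iR^{-1}=A_{i+1}$, $RB_iR^{-1}=B_{i+1}$, $RC_iR^{-1}=C_{i+1}$, while it shifts the punctures, and hence the half twists, in the opposite direction, $RH_{i,j}R^{-1}=H_{i-1,j-1}$ (the latter indices read modulo $p$). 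Thus every conjugate ${}^{R^{j}}G$ lies in $\Gamma$, and I have at my disposal a whole $\mathbb{Z}$-family of relators differing by these two \emph{independent} shifts.

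The first genuine step is to eliminate the half-twist factors $H_{b-1,b}H_{b+1,b}^{-1}$ and extract an honest product of Dehn twists. The mechanism is that the half-twist arcs lie in the punctured region, disjoint from every genus curve $a_i,b_i,c_i$, so each $H_{i,j}$ commutes with all of $A_i,B_i,C_i$; moreover $H_{i,j}=H_{j,i}$, so two half twists carried by a common arc cancel. I would therefore multiply $G$ by a suitable string of conjugates ${}^{R^{\mp1}}G^{\pm1},{}^{R^{\mp2}}G^{\pm1},\dots$, sliding the Dehn-twist factors past the half twists (legitimate by disjointness) and letting consecutive half twists annihilate in telescoping pairs. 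Because the puncture labels are permuted cyclically with period $p$, carrying this telescoping once around the $p$ punctures cancels the half-twist part entirely and deposits a pure product of Dehn twists, say $\Omega$, inside $\Gamma$.

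The remaining task is to disentangle the three types of genus twists and then invoke Theorem~\ref{thm1}. The twists appearing in $G$ come in type-pairs with \emph{distinct} index gaps: the $A$-pair $A_kA_{k+2}^{-1}$ has gap $2$, the $B$-pair $B_{k-1}B_{k+3}^{-1}$ has gap $4$, and the $C$-pair $C_{k-3}C_{k+4}^{-1}$ has gap $7$. Since the curves $a_i$ (resp.\ $b_i$, resp.\ $c_i$) of a single type are pairwise disjoint, the corresponding twists commute, and conjugating $\Omega$ by powers of $R$ and forming further products lets the three different gaps be exploited to separate the types and to collapse each resulting chain down to adjacent difference elements matching $A_1A_2^{-1}$, $B_1B_2^{-1}$, $C_1C_2^{-1}$. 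Together with $R\in\Gamma$, these are precisely the generators of Theorem~\ref{thm1}; feeding them into the words that Theorem~\ref{thm1} provides for the individual Dehn twists then shows that all of $A_i,B_i,C_i$ for $i=1,\dots,g$ lie in $\Gamma$. The main obstacle throughout is purely combinatorial: I must run two interlocking bookkeeping schemes simultaneously---the period-$p$ cancellation of the half twists and the type-by-type telescoping of the genus twists along a chain of length $g$---while verifying from the configuration in Figure~\ref{EE} exactly which curves are disjoint and which meet, and keeping every shifted index within its admissible range, which is where the hypotheses $g\ge 14$ and $p\ge 10$ are used to guarantee enough room.
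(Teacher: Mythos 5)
Your opening moves coincide with the paper's: $R=\rho_1\rho_2\in\Gamma$, $G=\rho_1F\in\Gamma$, and index shifting by conjugation with powers of $R$. After that, the proposal has two problems, one technical and one fatal. The technical one concerns your telescoping elimination of the half twists. To make the pairs cancel you must multiply the \emph{full} cycle of conjugates ${}^{R^{-j}}G$, $j=0,1,\dots,p-1$ (any partial product still carries an uncancelled pair of half twists about two distinct arcs), and at every stage you must slide each genus-twist factor past each half twist, i.e.\ you need every twist occurring in every ${}^{R^{-j}}G$ to commute with every half twist in sight. But $R$ does \emph{not} permute the curves $a_i$, $c_i$ cyclically: as the paper itself uses in the proof of Lemma~\ref{lemma4}, $R(a_g)=e_{1,p-1}$, a curve running through the punctured region. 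Consequently, as soon as $j>k-4$ the word ${}^{R^{-j}}G$ contains twists about curves of type $e_{i,j}$, which separate punctures from one another and therefore need not (and in general do not) commute with the half twists $H_{i,i+1}$. For $g=14$ and any $p\geq 10$ we have $k-4=3<9\leq p-1$, so this failure occurs long before your cycle closes up; the element $\Omega$ is simply not available in the generality of the lemma.

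The fatal problem is that, even granting $\Omega$, the passage from $\Omega$ to $A_1A_2^{-1}$, $B_1B_2^{-1}$, $C_1C_2^{-1}$ is asserted rather than proved. Commutativity of same-type twists together with the distinct ``gaps'' $2,4,7$ cannot ``separate the types'': twists of different types do not commute ($a_i$ meets $b_i$, and $b_i$ meets $c_i$), and no formal manipulation of conjugates of one fixed word by powers of $R$ isolates a pair of its factors. This is exactly where the paper does its real work, and by a different mechanism than the one you sketch: it sets $F_2=F_1^{R^{-3}}$, \emph{computes the action} of $F_1F_2^{-1}$ on the six curves appearing in $F_1$ (a geometric verification from Figure~\ref{EE}, not bookkeeping), and concludes that $F_3=F_1^{F_1F_2^{-1}}$ has the \emph{same} half-twist prefix as $F_1$ while differing from it in exactly two twist factors. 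Quotients of words with identical prefixes, such as $F_5F_3^{-1}=B_{k-3}C_{k-3}^{-1}$, $F_1F_3^{-1}=B_{k-1}A_kB_k^{-1}A_{k-1}^{-1}$, $F_8^{-1}F_6=C_{k+1}B_{k+2}^{-1}$ and $F_{11}F_9^{-1}=B_{k+2}^{-1}A_{k+2}$, then kill the half twists automatically (no global cancellation around the puncture cycle is ever needed, which is how the paper avoids your wrap-around issue) and at the same time isolate short differences; conjugating these by powers of $R$ gives $B_iC_i^{-1}$, $C_iB_{i+1}^{-1}$, $A_iB_i^{-1}$, hence the generators $A_1A_2^{-1}$, $B_1B_2^{-1}$, $C_1C_2^{-1}$ required by Theorem~\ref{thm1}. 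Your proposal contains no substitute for these conjugation computations, and they are the entire content of the proof.
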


\begin{proof}

Consider the models of $\Sigma_{g,p}$ depicted in Figure~\ref{EE}. Let $F_1:=H_{b-1,b}H_{b+1,b}^{-1}C_{k-3}B_{k-1}A_k
A_{k+2}^{-1}B_{k+3}^{-1}C_{k+4}^{-1}$ and let $\Gamma$ be the subgroup of $\mod(\Sigma_{g,p})$ generated by the elements 
$\rho_1$, $\rho_2$ and $\rho_1F_1$. One can see that the elements $R=\rho_1\rho_2$ and $F_1=\rho_1 \rho_1 F_1$ are contained in
the subgroup $\Gamma$.
Let $F_2$ be the element obtained by the conjugation of $F_1$ by $R^{-3}$. Since 
\[
R^{-3}(c_{k-3}, b_{k-1}, a_k, a_{k+2}, b_{k+3}, c_{k+4}) = (c_{k-6}, b_{k-4}, a_{k-3}, a_{k-1}, b_{k}, c_{k+1})
\]
and
\[
R^{-3}(z_{b-1},z_{b},z_{b+1})=(z_{b+2},z_{b+3},z_{b+4}),
\]
\[F_2=F_{1}^{R^{-3}}=H_{b+2,b+3}H_{b+4,b+3}^{-1}C_{k-6}B_{k-4}A_{k-3}A_{k-1}^{-1}B_{k}^{-1}C_{k+1}^{-1}\in \Gamma. 
\]
\noindent 
Let $F_3$ be the element $F_1^{F_1F_2^{-1}}$, that is
$
F_3=H_{b-1,b}H_{b+1,b}^{-1}C_{k-3}A_{k-1}B_k
A_{k+2}^{-1}B_{k+3}^{-1}C_{k+4}^{-1} \in \Gamma.
$

Since we use repeatedly similar calculations in the remaining parts of the paper, let us provide  some details here. It can be shown that the diffeomorphism $F_1F_2^{-1}$ maps the curves $\lbrace c_{k-3},b_{k-1},a_k,a_{k+2},b_{k+3},c_{k+4} \rbrace$ to the curves $\lbrace c_{k-3},b_{k-1},a_k,b_{k+2},a_{k+3},c_{k+4}  \rbrace$, respectively. Also it follows from the factorizations of half twists $H_{b-1,b}H_{b+1,b}^{-1}$ and $H_{b-4,b-3}H_{b-2,b-3}^{-1}$ commute and  we get
\begin{eqnarray*}
F_3&=&F_1^{F_1F_2^{-1}}\\
&=&(F_1F_2^{-1})(H_{b-1,b}H_{b+1,b}^{-1}C_{k-3}B_{k-1}A_k
A_{k+2}^{-1}B_{k+3}^{-1}C_{k+4}^{-1})(F_1F_2^{-1})^{-1}\\
&=&H_{b-1,b}H_{b+1,b}^{-1}C_{k-3}A_{k-1}B_k
A_{k+2}^{-1}B_{k+3}^{-1}C_{k+4}^{-1}.
\end{eqnarray*}
 The subgroup $\Gamma$ contains the following elements:
\begin{eqnarray*}
F_4&=&F_{3}^{R^{-3}}=H_{b+2,b+3}H_{b+4, b+3}^{-1}C_{k-6}A_{k-4}B_{k-3}A_{k-1}^{-1}B_{k}^{-1}C_{k+1}^{-1},\\
F_5&=&F_{3}^{F_3F_4}=H_{b-1,b}H_{b+1,b}^{-1}B_{k-3}A_{k-1}B_k
A_{k+2}^{-1}B_{k+3}^{-1}C_{k+4}^{-1}.
\end{eqnarray*} 
 From these, we obtain the element 
 $F_5F_3^{-1}=B_{k-3}C_{k-3}^{-1}$, which is contained in $\Gamma$. By conjugating this element with powers of $R$, we conclude that  
 \[
 B_{i}C_{i}^{-1}\in \Gamma \  \textrm{for}  \ i=1, \ldots, g-1.
 \]
The subgroup $\Gamma$ also contains the element $F_1F_3^{-1}=B_{k-1}A_{k}B_{k}^{-1}A_{k-1}^{-1}$.
After conjugating with $R^3$ and considering the inverse, we have $A_{k+2}B_{k+3}A_{k+3}^{-1}B_{k+1}^{-1} \in \Gamma$.
This in turn  implies that for $i=1,\ldots,g-1$, the elements
\[
A_iB_{i+1}A_{i+1}^{-1}B_i^{-1}\in \Gamma.
\]
We also have the following elements in $\Gamma$:
\begin{eqnarray*}
F_6&=&F_1 (A_{k+2}B_{k+3}A_{k+3}^{-1}B_{k+2}^{-1})=H_{b-1,b}H_{b+1,b}^{-1}C_{k-3}B_{k-1}A_{k}
B_{k+2}^{-1}A_{k+3}^{-1}C_{k+4}^{-1},\\
F_7&=&F_{6}^{R^{-3}}=H_{b+2,b+3}H_{b+4,b+3}^{-1}C_{k-6}B_{k-4}A_{k-3}
B_{k-1}^{-1}A_{k}^{-1}C_{k+1}^{-1} \textrm{ and }\\
F_8&=&F_{6}^{F_6F_7}=H_{b-1,b}H_{b+1,b}^{-1}C_{k-3}B_{k-1}A_k
C_{k+1}^{-1}A_{k+3}^{-1}C_{k+4}^{-1},
\end{eqnarray*}
Hence, we can conclude that $F_8^{-1}F_6=C_{k+1}B_{k+2}^{-1}\in \Gamma$. Again conjugating with $R$  implies that
\[
C_iB_{i+1}^{-1}\in \Gamma, \ \textrm{for all} \  i=1,\ldots,g-1. 
\]
Furthermore, we can see that  $\Gamma$  contains the following elements: 
\begin{eqnarray*}
F_9&=&(B_{k-3}C_{k-3}^{-1})F_3(C_{k+4}B_{k+5}^{-1})=H_{b-1,b}H_{b+1,b}^{-1}B_{k-3}A_{k-1}B_kA_{k+2}^{-1}B_{k+3}^{-1}B_{k+5}^{-1},\\
F_{10}&=&F_{9}^{R^{-3}}=H_{b+2,b+3}H_{b+4,b+3}^{-1}B_{k-6}A_{k-4}B_{k-3}A_{k-1}^{-1}B_{k}^{-1}B_{k+2}^{-1},\\
F_{11}&=&F_{9}^{F_9F_{10}}=H_{b-1,b}H_{b+1,b}^{-1}B_{k-3}A_{k-1}B_kB_{k+2}^{-1}B_{k+3}^{-1}B_{k+5}^{-1}.
\end{eqnarray*}
From these, we obtain $F_{11}F_{9}^{-1}=B_{k+2}^{-1} A_{k+2} \in \Gamma$. By the action of $R$, we can conclude that
\[
A_iB_i^{-1}\in \Gamma \ \textrm{for all}  \ i=1, \ldots, g. 
\]
This completes the proof by  Theorem~\ref{thm1} since the subgroup $\Gamma$ contains the elements
\begin{eqnarray*}
A_1A_{2}^{-1}&=&(A_1B_1^{-1})(B_1C_1^{-1})(C_1B_{2}^{-1})(B_{2}A_{2}^{-1}),\\
B_1B_{2}^{-1}&=&(B_1C_1^{-1})(C_1B_{2}^{-1}) \textrm{ and }\\
C_1C_{2}^{-1}&=&(C_1B_{2}^{-1})(B_{2}C_{2}^{-1}).
\end{eqnarray*}
\end{proof}

\begin{figure}[hbt!]
\begin{center}
\scalebox{0.2}{\includegraphics{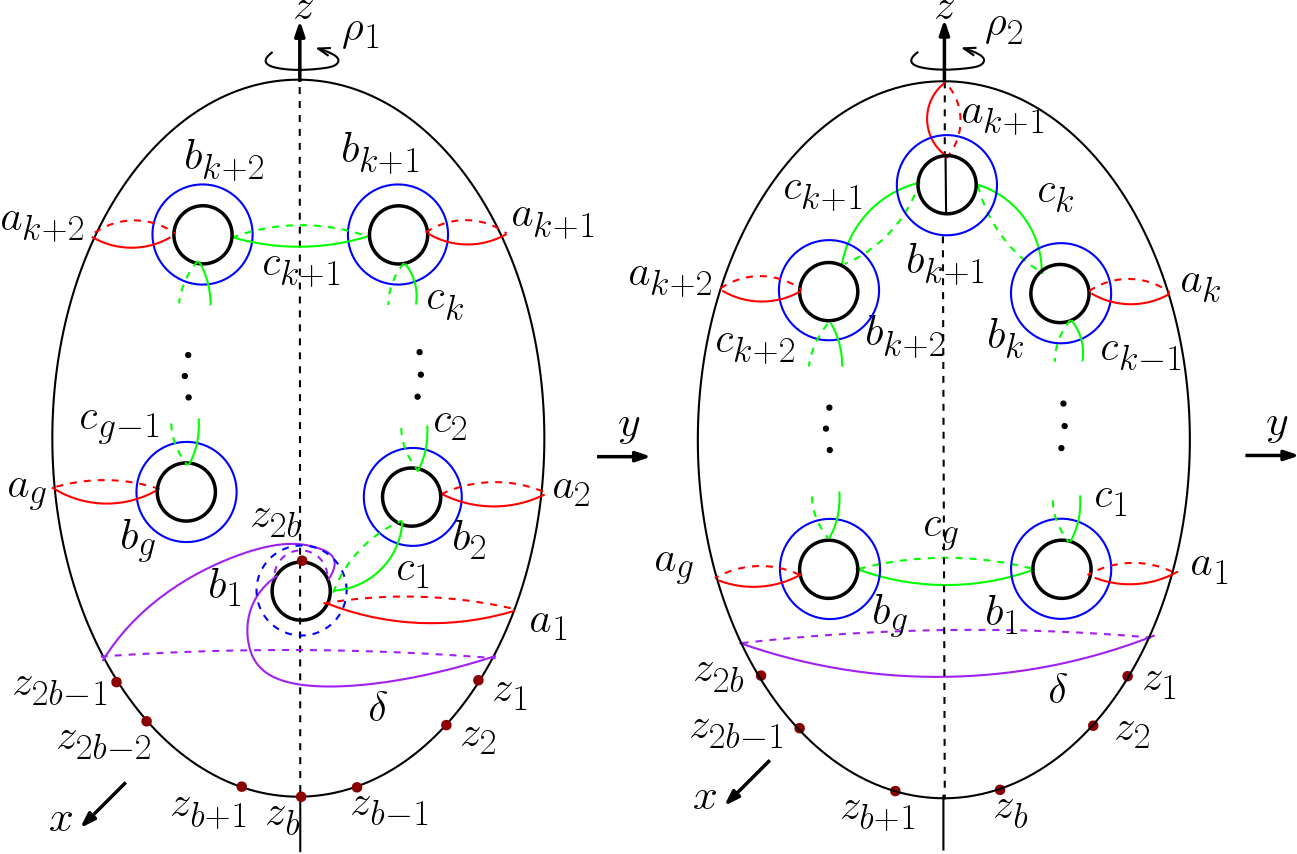}}
\caption{The involutions $\rho_1$ and $\rho_2$ for $g=2k+1$ and $p=2b$.}
\label{OE}
\end{center}
\end{figure}

If $g$ is  odd and $p$ is even, we have the following result:
\begin{lemma}\label{lemodd}
For every odd integer $g=2k+1 \geq15$ and even integer $p=2b \geq 10$, the subgroup of $\mod(\Sigma_{g,p})$  generated by the elements
\[
\rho_1,\rho_2 \textrm{ and }\rho_1H_{b-1,b}H_{b+1,b}^{-1}C_{k-3}B_kA_{k+1}A_{k+2}^{-1}B_{k+3}^{-1}C_{k+5}^{-1}
\]
 contains the Dehn twists $A_i$, $B_i$ and $C_i$  for $i=1,\ldots,g$.
\end{lemma}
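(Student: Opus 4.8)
The plan is to follow verbatim the strategy of the proof of Lemma~\ref{lemeven}, adjusting only the indices to the odd-genus configuration of Figure~\ref{OE}. Write $F_1 := H_{b-1,b}H_{b+1,b}^{-1}C_{k-3}B_kA_{k+1}A_{k+2}^{-1}B_{k+3}^{-1}C_{k+5}^{-1}$ and let $\Gamma$ be the subgroup generated by $\rho_1$, $\rho_2$ and $\rho_1F_1$. As before, since $\rho_1^2=1$ we immediately have $R=\rho_1\rho_2\in\Gamma$ and $F_1=\rho_1(\rho_1F_1)\in\Gamma$, so $\Gamma$ contains $F_1$ together with all of its conjugates by powers of $R$; the entire argument consists of manipulating these conjugates.

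First I would record the action of $R$ on the curves occurring in $F_1$. Using properties (i)--(iii), conjugation by $R^{-3}$ lowers every $a$-, $b$- and $c$-index by $3$ and carries the punctures $z_{b-1},z_b,z_{b+1}$ to $z_{b+2},z_{b+3},z_{b+4}$, so that $F_2:=F_1^{R^{-3}}$ is again an explicit product of two half twists and six Dehn twists. The structural feature that makes everything go through, exactly as in Lemma~\ref{lemeven}, is that the half-twist factors are supported in small twice-punctured disks disjoint from the curves $a_i$, $b_i$, $c_i$; hence they commute with all the Dehn twists and cancel whenever one forms a product of two $F$'s whose half-twist parts coincide.

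Then I would reproduce the same chain of auxiliary elements. Form $F_3=F_1^{F_1F_2^{-1}}$, whose effect is to transpose one adjacent $a/b$ pair inside $F_1$, iterate with $F_4=F_3^{R^{-3}}$ and $F_5=F_3^{F_3F_4}$, and read off a relation of the form $B_iC_i^{-1}$ once the half twists cancel; conjugating by powers of $R$ then gives $B_iC_i^{-1}\in\Gamma$ for $i=1,\ldots,g-1$. The same mechanism applied to the remaining pairs appearing in $F_1$ yields in turn $A_iB_{i+1}A_{i+1}^{-1}B_i^{-1}\in\Gamma$ and $C_iB_{i+1}^{-1}\in\Gamma$ for $i=1,\ldots,g-1$, and finally $A_iB_i^{-1}\in\Gamma$ for $i=1,\ldots,g$. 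These four families assemble, exactly as at the close of the proof of Lemma~\ref{lemeven}, into
\begin{eqnarray*}
A_1A_2^{-1}&=&(A_1B_1^{-1})(B_1C_1^{-1})(C_1B_2^{-1})(B_2A_2^{-1}),\\
B_1B_2^{-1}&=&(B_1C_1^{-1})(C_1B_2^{-1}),\\
C_1C_2^{-1}&=&(C_1B_2^{-1})(B_2C_2^{-1}),
\end{eqnarray*}
so that $\Gamma$ contains the four generators $R$, $A_1A_2^{-1}$, $B_1B_2^{-1}$, $C_1C_2^{-1}$ of Theorem~\ref{thm1}; hence $\Gamma$ contains every $A_i$, $B_i$ and $C_i$.

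I expect the only genuine difference from the even case, and thus the main point to verify, to lie in the curve-image computations. For $g=2k+1$ the involutions $\rho_1,\rho_2$ of Figure~\ref{OE} leave a central handle invariant, so $R$ no longer acts with the clean left-right symmetry about the waist that was available for $g=2k$. This is exactly why the indices in $F_1$ are placed asymmetrically ($C_{k-3},B_k,A_{k+1}$ on one side against $A_{k+2},B_{k+3},C_{k+5}$ on the other) rather than in the mirror pattern of Lemma~\ref{lemeven}. The delicate part will be to confirm that, with this choice, each product $F_iF_j^{\pm1}$ still has matching half-twist parts that cancel and still isolates a single Dehn-twist relation; once these finitely many explicit computations are checked against Figure~\ref{OE}, the remainder of the argument is formally identical to the even case.
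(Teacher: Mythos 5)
Your high-level strategy is the right one --- it is exactly what the paper does --- but your proposal stops short of a proof precisely at the point where the lemma's content lies, and the specific guesses you make about how the computations turn out are wrong for the odd-genus configuration. Concretely: with $F_1=H_{b-1,b}H_{b+1,b}^{-1}C_{k-3}B_kA_{k+1}A_{k+2}^{-1}B_{k+3}^{-1}C_{k+5}^{-1}$ and $F_2=F_1^{R^{-3}}=H_{b+2,b+3}H_{b+4,b+3}^{-1}C_{k-6}B_{k-3}A_{k-2}A_{k-1}^{-1}B_{k}^{-1}C_{k+2}^{-1}$, the only curves of $F_2$ that meet curves of $F_1$ are $b_{k-3}$ (which meets $c_{k-3}$) and $c_{k+2}$ (which meets $b_{k+3}$), while $b_k$ is common to both. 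So conjugating $F_1$ by $F_1F_2^{\pm 1}$ alters the $c$-type and $b$-type entries, and your claim that $F_3=F_1^{F_1F_2^{-1}}$ ``transposes one adjacent $a/b$ pair inside $F_1$'' is false here; that is a feature of the even-genus index pattern of Lemma~\ref{lemeven}, where the interacting pairs are $(b_{k-1},a_{k-1})$ and $(a_k,b_k)$. As a consequence, the chain of families you announce ($B_iC_i^{-1}$, then $A_iB_{i+1}A_{i+1}^{-1}B_i^{-1}$, then $C_iB_{i+1}^{-1}$, then $A_iB_i^{-1}$) is not what this configuration produces; in particular the family $A_iB_{i+1}A_{i+1}^{-1}B_i^{-1}$ never arises in the odd case.

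The paper's actual proof has to deviate from the even-case template in exactly these places: it conjugates by $G_1G_2$ (not $G_1G_2^{-1}$) to get $G_3=H_{b-1,b}H_{b+1,b}^{-1}B_{k-3}B_kA_{k+1}A_{k+2}^{-1}C_{k+2}^{-1}C_{k+5}^{-1}$, extracts first the family $B_{i+1}C_i^{-1}$ (from $G_3G_5^{-1}=B_kC_{k-1}^{-1}$), then forms $G_6=G_1(C_{k+5}B_{k+6}^{-1})$ by multiplying $G_1$ with an already-derived twist difference --- a move absent from your blueprint --- to obtain $C_iB_i^{-1}$, and finally $B_iA_i^{-1}$. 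Your closing assembly of $A_1A_2^{-1}$, $B_1B_2^{-1}$, $C_1C_2^{-1}$ is fine once one has families equivalent to $A_iB_i^{-1}$, $B_iC_i^{-1}$ and $C_iB_{i+1}^{-1}$, but the route to those families is the substance of the lemma, and your proposal both defers it (``once these finitely many explicit computations are checked'') and mispredicts its outcome. As written, the argument would break at the first step: the element you call $F_3$ is not the element you say it is.
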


\begin{proof}
Consider the models for $\Sigma_{g,p}$ as shown in Figure~\ref{OE}. Let $\Gamma$ denote the subgroup of $\mod(\Sigma_{g,p})$ 
generated by the elements $\rho_1$, $\rho_2$ and $\rho_1G_1$, where $G_1=H_{b-1,b}H_{b+1,b}^{-1}C_{k-3}B_kA_{k+1}A_{k+2}^{-1}B_{k+3}^{-1}C_{k+5}^{-1}$. 
The elements $R=\rho_1\rho_2$ and $G_1=\rho_1 \rho_1 G_1$ belong to the subgroup $\Gamma$. 
Let $G_2$ denote the conjugation of $G_1$ by $R^{-3}$,
\[
G_2=G_{1}^{R^{-3}}=H_{b+2,b+3}H_{b+4,b+3}^{-1}C_{k-6}B_{k-3}A_{k-2}A_{k-1}^{-1}B_{k}^{-1}C_{k+2}^{-1}.
\]
It is easy to verify that the element
\begin{eqnarray*}
G_3&=&G_{1}^{G_1G_2}\\
&=&H_{b-1,b}H_{b+1,b}^{-1}B_{k-3}B_kA_{k+1}A_{k+2}^{-1}C_{k+2}^{-1}C_{k+5}^{-1}
\end{eqnarray*}
is contained in $\Gamma$.
Let
\[
G_4=G_{3}^{R^{-3}}=H_{b+2,b+3}H_{b+4,b+3}^{-1}B_{k-6}B_{k-3}A_{k-2}A_{k-1}^{-1}C_{k-1}^{-1}C_{k+2}^{-1}.
\]
Thus we get the element
\begin{eqnarray*}
G_5=G_3^{G_3G_4^{-1}}
=H_{b-1,b}H_{b+1,b}^{-1}B_{k-3}C_{k-1}A_{k+1}A_{k+2}^{-1}C_{k+2}^{-1}C_{k+5}^{-1},
\end{eqnarray*}
which is contained in $G$. This implies that $G_3G_5^{-1}=B_kC_{k-1}^{-1}\in \Gamma$. By conjugating $B_kC_{k-1}^{-1}$ with powers of $R$, we see that
\[
B_{i+1}C_{i}^{-1}\in \Gamma,
\]
for all $i=1,\ldots,g-1$. In particular, the element $C_{k+5}B_{k+6}^{-1}\in \Gamma$. Hence, the subgroup $\Gamma$ contains the following element:
\[
G_6=G_1(C_{k+5}B_{k+6}^{-1})=H_{b-1,b}H_{b+1,b}^{-1}C_{k-3}B_kA_{k+1}A_{k+2}^{-1}B_{k+3}^{-1}B_{k+6}^{-1}.
\]
Then, we see that the elements
\begin{eqnarray*}
G_7&=&G_6^{R^{-3}}=H_{b+2,b+3}H_{b+4,b+3}^{-1}C_{k-6}B_{k-3}A_{k-2}A_{k-1}^{-1}B_{k}^{-1}B_{k+3}^{-1} \textrm{ and}\\
G_8&=&G_6^{G_6G_7}=H_{b-1,b}H_{b+1,b}^{-1}B_{k-3}B_kA_{k+1}A_{k+2}^{-1}B_{k+3}^{-1}B_{k+6}^{-1}
\end{eqnarray*}
are contained in $\Gamma$, which implies that the subgroup $\Gamma$ contains the element $G_6G_8^{-1}=C_{k-3}B_{k-3}^{-1}$. By the action of $R$ we see that
\[
C_iB_i^{-1} \in \Gamma
\]
for all $i=1,\ldots, g-1$.  Moreover, we get 
\begin{eqnarray*}
G_9&=&(B_{k-2}C_{k-3}^{-1})G_6=H_{b-1,b}H_{b+1,b}^{-1}B_{k-2}B_kA_{k+1}A_{k+2}^{-1}B_{k+3}^{-1}B_{k+6}^{-1} \in \Gamma,\\
G_{10}&=&G_9^{R^{-3}}=H_{b+2,b+3}H_{b+4,b+3}^{-1}B_{k-5}B_{k-3}A_{k-2}A_{k-1}^{-1}B_{k}^{-1}B_{k+3}^{-1}\in \Gamma \textrm{ and}\\
G_{11}&=&G_9^{G_9G_{10}}=H_{b-1,b}H_{b+1,b}^{-1}A_{k-2}B_kA_{k+1}A_{k+2}^{-1}B_{k+3}^{-1}B_{k+6}^{-1}\in \Gamma.
\end{eqnarray*}
From these, we have $G_9G_{11}^{-1}=B_{k-2}A_{k-2}^{-1}\in \Gamma$ so that 
\[
B_iA_i^{-1}\in \Gamma,
\]
for $i=1,\ldots,g$, by the action of $R$. The remaining part of the proof can be completed as in the proof of Lemma~\ref{lemeven}.
\end{proof}

In the following four lemmata, we give generating sets for smaller genera.
\begin{lemma}\label{lem6}
For $g=6$ and every even integer $p\geq4$, the group generated by the elements
\[
\rho_1,\rho_2, \rho_2B_2A_3A_4^{-1}B_5^{-1} \textrm{ and }\rho_1H_{b-1,b}H_{b+1,b}^{-1}C_{3}C_{4}^{-1}
\]
 contains the Dehn twists $A_i$, $B_i$ and $C_i$  for $i=1,\ldots,g$.
\end{lemma}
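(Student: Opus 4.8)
The plan is to mirror the mechanism of Lemma~\ref{lemeven}, adapted to the much shorter index range available at $g=6$. Write $G:=B_2A_3A_4^{-1}B_5^{-1}$ and $H:=H_{b-1,b}H_{b+1,b}^{-1}C_3C_4^{-1}$, and let $\sigma:=H_{b-1,b}H_{b+1,b}^{-1}$ denote the half-twist factor of $H$. Since $\rho_1$ and $\rho_2$ are involutions lying in $\Gamma$, the element $R=\rho_1\rho_2$ belongs to $\Gamma$, and from the two remaining generators we recover $G=\rho_2(\rho_2 G)\in\Gamma$ and $H=\rho_1(\rho_1 H)\in\Gamma$. The one structural fact I will use repeatedly is that $\sigma$ is supported near the punctures, hence disjoint from every curve $a_i,b_i,c_i$; consequently $\sigma$ commutes with all genus twists and, crucially, it cancels whenever we divide two elements of $\Gamma$ that carry the same half-twist prefix.

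First I would produce the relations involving only $A$- and $B$-twists from $G$ alone. The four curves in $G$ are pairwise disjoint, so $G$ commutes termwise with its conjugates $G^{R^{s}}$; to create the braid interactions needed to isolate single handles I would form curve-swapping conjugators of the shape $G^{\,G(G^{R^{s}})^{-1}}$, exactly as $F_3=F_1^{F_1F_2^{-1}}$ is formed in Lemma~\ref{lemeven}, and then take ratios of the resulting words to peel off $A_iB_i^{-1}$ and $A_iB_{i+1}A_{i+1}^{-1}B_i^{-1}$ for all relevant $i$ after conjugating by powers of $R$. Because $g=6$ admits only the shifts $R^{\pm1}$ and $R^{\pm2}$ before indices run off the chain, the curves in $G$ have been clustered in the range $2\leq i\leq 5$ precisely so that these small shifts suffice; this is where the argument departs from the $R^{-3}$ shifts available when $g\geq 14$.

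Next I would extract the $C$-relations from $H$, whose genus part is merely $C_3C_4^{-1}$. I would conjugate $H$ by an element $W\in\Gamma$ assembled from $G^{R}$ (which supplies a $B_3$ factor) together with the genus elements already obtained, chosen so that $W$ sends $c_3\mapsto b_3$ while fixing $c_4$. Then in $H^{W}H^{-1}$ the factor $\sigma$ commutes past the genus twists and cancels, the $C_4$ terms cancel, and what survives is $B_3C_3^{-1}\in\Gamma$; conjugating by powers of $R$ gives $B_iC_i^{-1}$, and an analogous swap $c_4\mapsto b_5$ yields $C_iB_{i+1}^{-1}$. Combining the telescoping identities $A_1A_2^{-1}=(A_1B_1^{-1})(B_1C_1^{-1})(C_1B_2^{-1})(B_2A_2^{-1})$, $B_1B_2^{-1}=(B_1C_1^{-1})(C_1B_2^{-1})$ and $C_1C_2^{-1}=(C_1B_2^{-1})(B_2C_2^{-1})$ as at the end of Lemma~\ref{lemeven}, all three consecutive differences lie in $\Gamma$. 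Since $\Gamma$ then contains $R$, $A_1A_2^{-1}$, $B_1B_2^{-1}$ and $C_1C_2^{-1}$, Theorem~\ref{thm1} yields that $\Gamma$ contains each individual twist $A_i$, $B_i$, $C_i$, as required.

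The main obstacle is the construction of the previous paragraph: at the stage where the $C$-relations are needed, no single Dehn twist is yet known to lie in $\Gamma$, so the swap $c_3\mapsto b_3$ must be realised by a composite element built only from $G$, $H$ and their $R$-conjugates, and one must check that this composite fixes $c_4$ and the other relevant curves, so that the ratio $H^{W}H^{-1}$ collapses to a single clean relation rather than a commutator. Verifying that the half-twist factor $\sigma$ cancels uniformly for every even $p\geq 4$, i.e. for every $b\geq 2$ and every position of the punctures, is the bookkeeping that makes this step delicate; the genus-side manipulations are, by contrast, routine once the small-shift analogues of the Lemma~\ref{lemeven} identities are written down.
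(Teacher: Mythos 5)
Your overall strategy is the same as the paper's (extract $A_iB_i^{-1}$ from the $A/B$-word alone, then produce $B_iC_i^{-1}$ and $B_{i+1}C_i^{-1}$ by conjugating the half-twist generator, then telescope and invoke Theorem~\ref{thm1}), but the proposal is incomplete precisely at the step you yourself flag as the main obstacle, and the specific recipe you give there would fail. You propose to realise the swap $c_3\mapsto b_3$ (fixing $c_4$) by a conjugator $W$ ``assembled from $G^{R}$ together with the genus elements already obtained.'' However, every element available at that stage of your argument --- $G$, its $R$-conjugates, the words of the shape $G^{G(G^{R^{s}})^{-1}}$, and the differences $A_iB_i^{-1}$ --- is a product of Dehn twists about the curves $a_i,b_i$ only. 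Any such product is supported in a regular neighbourhood $N$ of $\bigcup_i (a_i\cup b_i)$, which is a disjoint union of one-holed tori, one per handle. Since $i(c_3,b_3)=1$, the curve $c_3$ cannot be isotoped off the handle-$3$ component of $N$, so $i(c_3,\partial N)\neq 0$; a diffeomorphism supported in $N$ preserves this quantity, whereas $i(b_3,\partial N)=0$. Hence no element built from $A$- and $B$-twists alone can send $c_3$ to $b_3$: the conjugator must contain a $C$-twist, and at this stage the only source of one is $H$ itself.

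That is exactly how the paper closes the gap, with explicit elements: writing $F_1=G$, $E_1=H$, it forms $F_2=F_1^{R}=B_3A_4A_5^{-1}B_6^{-1}$ and $F_3=F_1^{F_1F_2}=B_2B_3A_4^{-1}A_5^{-1}$, and then conjugates $E_1$ by $E_1F_3$ and by $E_1F_1$, so that $E_1$ supplies the $C_3$-twist and $F_3$ (resp.\ $F_1$) the adjacent $B$-twist needed for the braid move $C_3B_3(c_3)=b_3$. This yields
$F_6=E_1^{E_1F_3}=H_{b-1,b}H_{b+1,b}^{-1}B_{3}C_{4}^{-1}$ and $F_7=E_1^{E_1F_1}=H_{b-1,b}H_{b+1,b}^{-1}C_{3}B_{5}^{-1}$,
whence $F_6E_1^{-1}=B_3C_3^{-1}\in\Gamma$ and $F_7^{-1}E_1=B_5C_4^{-1}\in\Gamma$, and conjugation by powers of $R$ gives all $B_iC_i^{-1}$ and $B_{i+1}C_i^{-1}$. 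Your final paragraph does correctly broaden the allowed building blocks to composites ``built only from $G$, $H$ and their $R$-conjugates,'' but you never exhibit such a composite nor verify its action on $c_3$ and $c_4$; since producing and checking these conjugators is the entire content of the lemma, the proposal as written has a genuine gap.
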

\begin{proof}
Consider the models for $\Sigma_{6,p}$ as shown in Figure~\ref{EE}. Let $\Gamma$ be the subgroup of $\mod(\Sigma_{6,p})$ generated by the elements $\rho_1$, $\rho_2$, $\rho_2F_1$ and $\rho_1E_1$ where $F_1=B_2A_3A_4^{-1}B_5^{-1}$ and $E_1=H_{b-1,b}H_{b+1,b}^{-1}C_{3}C_{4}^{-1}$. Hence the elements $R=\rho_1\rho_2$, $F_1=\rho_2 \rho_2 F_1$ and $E_1=\rho_1\rho_1E_1$ are contained in the subgroup $\Gamma$. 

The subgroup $\Gamma$ contains the following elements:
\begin{eqnarray*}
F_2&=&F_1^{R}=B_3A_4A_5^{-1}B_6^{-1} \in \Gamma,\\
F_{3}&=&F_{1}^{F_1F_2}=B_2B_3A_4^{-1}A_5^{-1}\in \Gamma\\
F_4&=&F_3^{R}=B_3B_4A_5^{-1}A_6^{-1}\in \Gamma
\textrm{ and}\\
F_{5}&=&F_3^{F_3F_{4}^{-1}}=B_2B_3B_4^{-1}A_5^{-1}\in \Gamma.
\end{eqnarray*}
Hence we get the element $F_5^{-1}F_3=B_4A_4^{-1}\in \Gamma$. By the action of $R$, for all $i=1,\ldots,6$,
\[
A_iB_i^{-1}\in \Gamma.
\]
Moreover, we have
\begin{eqnarray*}
F_6&=&E_1^{E_1F_3}=H_{b-1,b}H_{b+1,b}^{-1}B_{3}C_{4}^{-1} \in \Gamma
\textrm{ and}\\
F_{7}&=&E_1^{E_1F_{1}}=H_{b-1,b}H_{b+1,b}^{-1}C_{3}B_{5}^{-1}\in \Gamma.
\end{eqnarray*}
This implies that $F_6E_1^{-1}=B_3C_3^{-1}\in \Gamma$ and $F_7^{-1}E_1=B_5C_4^{-1}\in \Gamma$ and so
\[
B_iC_i^{-1}\in \Gamma \textrm{ and } B_{i+1}C_{i}^{-1}\in \Gamma,
\]
for all $i=1,\ldots,5$, by conjugating these elements with powers of $R$. The proof can be completed as in the proof of Lemma~\ref{lemeven}.
\end{proof}

\begin{lemma}\label{lem5}
For $g=5$ and every even integer $p\geq4$, the group generated by the elements
\[
\rho_1,\rho_2, \rho_1H_{b-1,b}H_{b+1,b}^{-1}A_{3}A_{4}^{-1} \textrm{ and }\rho_2A_2B_2C_2C_3^{-1}B_4^{-1}A_4^{-1} 
\]
 contains the Dehn twists $A_i$, $B_i$ and $C_i$  for $i=1,\ldots,g$.
\end{lemma}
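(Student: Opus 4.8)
The plan is to run the same extraction procedure used in the proofs of Lemmas~\ref{lemeven} and~\ref{lem6}. Set $E_1=H_{b-1,b}H_{b+1,b}^{-1}A_3A_4^{-1}$ and $F_1=A_2B_2C_2C_3^{-1}B_4^{-1}A_4^{-1}$, and let $\Gamma$ be the subgroup of $\mod(\Sigma_{5,p})$ generated by $\rho_1,\rho_2,\rho_1E_1$ and $\rho_2F_1$. Because $\rho_1$ and $\rho_2$ are involutions, the three elements $R=\rho_1\rho_2$, $E_1=\rho_1\cdot(\rho_1E_1)$ and $F_1=\rho_2\cdot(\rho_2F_1)$ already lie in $\Gamma$; so I may work with the rotation $R$ (which raises the indices of the curves $a_i,b_i,c_i$ by one, by properties~(i)--(iii)) together with the two words $E_1$ and $F_1$, the half twists occurring only inside $E_1$. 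The target is to show that the three families $A_iB_i^{-1}$, $B_iC_i^{-1}$ and $C_iB_{i+1}^{-1}$ lie in $\Gamma$ for every admissible $i$; once they do, the three products displayed at the end of the proof of Lemma~\ref{lemeven} place $A_1A_2^{-1}$, $B_1B_2^{-1}$ and $C_1C_2^{-1}$ in $\Gamma$, and Theorem~\ref{thm1} (applicable since $g=5\geq3$) then shows that $\Gamma$ contains every Dehn twist $A_i,B_i,C_i$, which is the assertion.

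First I would work with $F_1$ alone. Conjugating by powers of $R$ gives the index-shifted copies $F_1^{R}=A_3B_3C_3C_4^{-1}B_5^{-1}A_5^{-1}$ and $F_1^{R^{-1}}=A_1B_1C_1C_2^{-1}B_3^{-1}A_3^{-1}$, and then forming conjugates of $F_1$ by carefully chosen products of these copies, exactly the moves $F_3=F_1^{F_1F_2}$ and $F_5=F_3^{F_3F_4^{-1}}$ performed in Lemma~\ref{lem6}, replaces a single twist in the word $F_1$ by a neighbouring one. The core trick is to manufacture two words in $\Gamma$ that agree in every slot except one, where one carries $X_i$ and the other $Y_i$ with the surrounding twists disjoint from both; their quotient then collapses, up to a commuting conjugation, to the single two-term difference $Y_iX_i^{-1}$. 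Since $F_1$ already displays the adjacent pairs $A_2B_2$, $B_2C_2$, $C_2C_3$, $C_3B_4$ and $B_4A_4$, I expect it to deliver most of the required differences directly, and conjugating each by powers of $R$ spreads it across all admissible indices.

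Next I would bring in $E_1$ to supply whatever $F_1$ cannot reach, in particular the adjacencies involving $a_3,a_4$. Since $E_1$ carries the half twists $H_{b-1,b}H_{b+1,b}^{-1}$, the argument must arrange for these to cancel, precisely as $F_6E_1^{-1}$ and $F_7^{-1}E_1$ did in Lemma~\ref{lem6}. Concretely, I would conjugate $E_1$ by an element $g\in\Gamma$ assembled from the differences already found, chosen so that $g$ moves only one of $a_3,a_4$ onto a neighbouring $b$- or $c$-curve while fixing the punctured disks $D_{b-1,b}$ and $D_{b+1,b}$; then in the product $E_1^{g}E_1^{-1}$ the common half-twist factor is conjugated only by Dehn twists disjoint from its support, so it commutes through and drops out, leaving a single difference of the remaining family. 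Propagating by $R$ and combining with the output of the previous step supplies all of $A_iB_i^{-1}$, $B_iC_i^{-1}$, $C_iB_{i+1}^{-1}$, after which the endgame of the first paragraph closes the proof.

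The step I expect to be the main obstacle is the bookkeeping forced by the small genus. With only five handles there is almost no slack: $R$ may shift $F_1$ by $R^{\pm1}$ but $F_1^{R^{2}}$ would already require the nonexistent curves $c_5,b_6,a_6$, so every conjugating element must be checked to keep all indices inside the ranges $1\le i\le5$ for $A_i,B_i$ and $1\le i\le4$ for $C_i$; this is exactly why $F_1$ and $E_1$ are centred on the middle handles. The accompanying technical point is to verify, in each isolating product, that the conjugating element is genuinely supported away from $D_{b-1,b}$ and $D_{b+1,b}$, so that the two half twists really do commute past the surviving Dehn twists and cancel. Granting this, the remaining computations are the routine shifts and swaps already carried out in Lemmas~\ref{lemeven} and~\ref{lem6}.
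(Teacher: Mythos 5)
Your proposal has a genuine gap, and it sits exactly where you park the main work. In your Step 1 you claim that the moves of Lemma~\ref{lem6}, namely $F_3=F_1^{F_1F_2}$ and $F_5=F_3^{F_3F_4^{-1}}$, applied to the chain word $F_1=A_2B_2C_2C_3^{-1}B_4^{-1}A_4^{-1}$ and its $R$-shifts, will ``replace a single twist in the word by a neighbouring one'' and so deliver the differences $A_iB_i^{-1}$, $B_iC_i^{-1}$, etc. That slot-replacement calculus works in Lemmas~\ref{lemeven} and~\ref{lem6} only because the curves occurring in the template word there are \emph{pairwise disjoint} ($b_2,a_3,a_4,b_5$ in Lemma~\ref{lem6}; $c_{k-3},b_{k-1},a_k,a_{k+2},b_{k+3},c_{k+4}$ in Lemma~\ref{lemeven}), so conjugation just permutes named curves via isolated braid relations. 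Your $F_1$ instead consists of two chains $a_2$--$b_2$--$c_2$ and $c_3$--$b_4$--$a_4$ of curves meeting once, and the calculus breaks down: with $F_2=F_1^{R}=A_3B_3C_3C_4^{-1}B_5^{-1}A_5^{-1}$ one computes $F_2(c_2)=t_{a_3}t_{b_3}(c_2)$, a curve that meets both $c_2$ and $c_3$ once and is isotopic to none of the $a_i,b_i,c_i$; applying $F_1$ afterwards only makes it worse (indeed even $F_1(c_2)=t_{a_2}t_{b_2}(c_2)$ is already unnamed). Hence $F_1^{F_1F_2}$ is not a word in the named twists, no two-term difference drops out, and the ``carefully chosen products'' are never exhibited. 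Since your Step 2 builds its conjugator $g$ ``from the differences already found,'' the failure of Step 1 leaves Step 2 with no input, so the whole argument collapses.

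The repair --- and this is what the paper actually does --- is to reverse the roles of your two generators: no preliminary differences are needed, because the chain word itself is the conjugator, not the template. By braid relations, $A_2B_2C_2C_3^{-1}B_4^{-1}A_4^{-1}$ maps $a_2\mapsto b_2$, $b_2\mapsto c_2$, $c_3\mapsto b_4$, fixes $a_3$, and fixes the punctured disks supporting the half twists. Conjugating the shifted half-twist word $H_{b,b+1}H_{b+2,b+1}^{-1}A_2A_3^{-1}$ by it once and twice produces $H_{b,b+1}H_{b+2,b+1}^{-1}B_2A_3^{-1}$ and $H_{b,b+1}H_{b+2,b+1}^{-1}C_2A_3^{-1}$; quotients of consecutive terms cancel the half twists and yield $A_2B_2^{-1}$ and $B_2C_2^{-1}$, hence $A_iB_i^{-1}$, $B_jC_j^{-1}$ and $A_iC_i^{-1}$ by conjugating with powers of $R$; finally conjugating $A_3C_3^{-1}$ by the chain word gives $A_3B_4^{-1}$, which supplies the last family $A_iB_{i+1}^{-1}$, and your endgame via Theorem~\ref{thm1} then goes through. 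Your instinct in Step 2 about making the half twists cancel against each other is exactly the right mechanism; the missing idea is that the chain word needs no preparation at all to serve as that conjugator.
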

\begin{proof}
Consider the models for $\Sigma_{5,p}$ as shown in Figure~\ref{OE}. Let $\Gamma$ denote the subgroup of $\mod(\Sigma_{5,p})$ generated by the elements $\rho_1$, $\rho_2$, $\rho_1F_1$ and $\rho_2E_1$ where $F_1=H_{b-1,b}H_{b+1,b}^{-1}A_{3}A_{4}^{-1}$ and $E_1=A_2B_2C_2C_3^{-1}B_4^{-1}A_4^{-1}$. Thus the elements $R=\rho_1\rho_2$, $F_1=\rho_1 \rho_1 F_1$ and $E_1=\rho_2\rho_2E_1$ are in the subgroup $\Gamma$. 

One can obtain the following elements:
\begin{eqnarray*}
F_2&=&F_1^{R^{-1}}=H_{b,b+1}H_{b+2,b+1}^{-1}A_{2}A_{3}^{-1} 
\\
F_{3}&=&F_2^{E_1}=H_{b,b+1}H_{b+2,b+1}^{-1}B_{2}A_{3}^{-1} \textrm{ and}\\
F_4&=&F_3^{E_1}=H_{b,b+1}H_{b+2,b+1}^{-1}C_{2}A_{3}^{-1},
\end{eqnarray*}
which are contained in $\Gamma$. Thus we get that $F_2F_3^{-1}=A_2B_2^{-1}\in \Gamma$ and $F_3F_4^{-1}=B_2C_2^{-1}\in \Gamma$. By conjugating these elements with powers of $R$, we see that 
\[
A_iB_i^{-1}\in \Gamma \textrm{ and }B_jC_j^{-1}\in \Gamma,
\]
which also implies that $A_iC_i^{-1}\in \Gamma$ for all $i=1,\ldots, 5$ and $j=1,\ldots, 4$. 
Finally, it can be verified that 
\[
E_1(a_3,c_3)=(a_3,b_4)
\]
so that the group $\Gamma$ contains the element
\[
(A_3C_3^{-1})^{E_1}=A_3B_4^{-1}.
\]
Hence $A_iB_{i+1}^{-1}\in \Gamma$ for all $i=1,\ldots,5$ by the action of $R$. The rest of the proof is similar to that of Lemma~\ref{lemeven}.
\end{proof}

\begin{lemma}\label{lem4}
For $g=4$ and every even integer $p\geq4$, the group generated by the elements
\[
\rho_1,\rho_2, \rho_2B_1A_2A_3^{-1}B_4^{-1} \textrm{ and }\rho_1H_{b-1,b} H_{b+1,b}^{-1}C_{2}C_{3}^{-1} 
\]
 contains the Dehn twists $A_i$, $B_i$ and $C_i$  for $i=1,\ldots,g$.
\end{lemma}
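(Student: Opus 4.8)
The plan is to treat this as the genus-$4$ counterpart of Lemma~\ref{lem6}, since $g=4=2k$ is even and we again work with the model of $\Sigma_{g,p}$ in Figure~\ref{EE} and with generators of the same shape: the two rotations $\rho_1,\rho_2$, one ``$A/B$-type'' element $\rho_2F_1$ with $F_1=B_1A_2A_3^{-1}B_4^{-1}$, and one ``$B/C$-type'' element $\rho_1E_1$ with $E_1=H_{b-1,b}H_{b+1,b}^{-1}C_2C_3^{-1}$. First I would record that $R=\rho_1\rho_2\in\Gamma$ and, using $\rho_1^2=\rho_2^2=1$, that $F_1=\rho_2(\rho_2F_1)\in\Gamma$ and $E_1=\rho_1(\rho_1E_1)\in\Gamma$. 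The two word-generators are split by design: $F_1$ will produce the relations among the $A_i$ and $B_i$, while $E_1$ produces those among the $B_i$ and $C_i$; combining them through the rotation $R$ will give every $A_i,B_i,C_i$.

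For the $A/B$ part I would mimic the chain of conjugations in Lemma~\ref{lem6}. Conjugating $F_1$ by a power of $R$ gives an index-shifted copy $F_2$; forming the product $F_1F_2^{\pm1}$ and conjugating $F_1$ by it yields an element $F_3$ that agrees with $F_1$ except that a single twist is replaced, so that a quotient of two such elements (as with $F_5^{-1}F_3$ in Lemma~\ref{lem6}) collapses to a single element of the form $A_iB_i^{-1}$. Spreading this by the action of $R$ then gives $A_iB_i^{-1}\in\Gamma$ for all $i=1,\dots,4$.

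For the $B/C$ part I would conjugate $E_1$ by elements assembled from the $F_j$ and the $A_iB_i^{-1}$ already obtained, chosen so that $c_2$ or $c_3$ is carried onto a neighbouring $b$-curve. Because the arcs $\beta_{b-1,b}$ and $\beta_{b+1,b}$ are disjoint from all the $a_i,b_i,c_i$, the half-twist factor $H_{b-1,b}H_{b+1,b}^{-1}$ commutes through these conjugations untouched and then cancels when one divides by $E_1$; this leaves pure Dehn-twist quotients $B_iC_i^{-1}$ and $B_{i+1}C_i^{-1}$, which I would again spread over all admissible $i$ by conjugating with powers of $R$. At this point $\Gamma$ contains $A_iB_i^{-1}$, $B_iC_i^{-1}$, and $C_iB_{i+1}^{-1}$ for the relevant ranges.

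Finally I would finish exactly as at the end of Lemma~\ref{lemeven}: together with $R\in\Gamma$, the remaining three Korkmaz generators $A_1A_2^{-1}$, $B_1B_2^{-1}$, $C_1C_2^{-1}$ of Theorem~\ref{thm1} are obtained as products of the difference elements $A_iB_i^{-1}$, $B_iC_i^{-1}$, $C_iB_{i+1}^{-1}$ already shown to lie in $\Gamma$, so Theorem~\ref{thm1} yields that $\Gamma$ contains every $A_i,B_i,C_i$. I expect the only real difficulty to be the index bookkeeping forced by the small genus: since $R$ wraps $b_4$ to $b_1$ and the curves $c_i$ exist only for $i\le g-2=2$, the intermediate words $F_2,F_3,\dots$ must be checked to involve only curves that actually exist on $\Sigma_{4,p}$, and one must confirm that the half-twists genuinely commute with every twist they are pushed past. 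Unlike the large-genus Lemmas~\ref{lemeven} and~\ref{lemodd}, there is little room to spread indices, so the choice of which powers of $R$ (and which products $F_iF_j^{\pm1}$) to conjugate by has to be arranged so that no step leaves the valid index range.
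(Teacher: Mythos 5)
Your overall skeleton (extract difference elements $A_iB_i^{-1}$, $B_iC_i^{-1}$, $C_iB_{i+1}^{-1}$, then finish with $R$ and Theorem~\ref{thm1} as in Lemma~\ref{lemeven}) matches the paper, but your plan for the $A/B$ part contains a genuine gap, and it is exactly the point where the paper's proof is structured differently. You propose to get the $A_iB_i^{-1}$ by running the Lemma~\ref{lem6} chain on $F_1=B_1A_2A_3^{-1}B_4^{-1}$ using only $F_1$ and $R$. In Lemma~\ref{lem6} this works because $F_1=B_2A_3A_4^{-1}B_5^{-1}$ occupies indices $2$--$5$ on a genus-$6$ surface: the conjugates $F_1^{R}$ and $F_3^{R}$ stay inside the standard curve system and overlap $F_1$ in few curves, so each conjugator moves exactly one curve and quotients collapse to a single difference. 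In genus $4$ the word $F_1$ uses \emph{all} the indices, and the wrap-around is fatal: $R(b_4)=b_1$ but $R(a_4)=e_{1,p-1}$ (not $a_1$), so shifting up leaves the system $\{a_i,b_i,c_i\}$, while shifting down makes the copies overlap $F_1$ in two curves. Concretely, the first step still works: with $F_2=F_1^{R}=B_2A_3A_4^{-1}B_1^{-1}$ one gets $F_1^{F_1F_2}=B_1B_2A_3^{-1}A_4^{-1}=:F_3$. But the next step breaks down: $F_3^{R}$ involves $E_{1,p-1}$, and using $F_3^{R^{-1}}=B_4B_1A_2^{-1}A_3^{-1}$ instead, the conjugator $F_3\bigl(F_3^{R^{-1}}\bigr)^{-1}$ moves \emph{two} curves of $F_3$ (it sends $b_2\mapsto a_2$ and $a_4\mapsto b_4$), so conjugating merely returns $F_1$; the quotients one can form, such as $F_1F_3^{-1}=(A_2B_2^{-1})(B_4^{-1}A_4)$, are products of two commuting difference elements and never a single $A_iB_i^{-1}$. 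Your own caveat (``the choice of powers of $R$ has to be arranged so that no step leaves the valid index range'') is precisely the obstruction, and no such arrangement exists for the pure-$F_1$ chain.

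The paper resolves this by reversing your order and mixing the two word-generators. It first obtains $F_2=E_1^{E_1F_1}=H_{b-1,b}H_{b+1,b}^{-1}C_2B_4^{-1}$, hence $F_2^{-1}E_1=B_4C_3^{-1}$, and with $\rho_1$ and powers of $R$ all elements $B_iC_i^{-1}$, $B_{i+1}C_i^{-1}$; note this step needs no $A_iB_i^{-1}$ at all, so there is no circularity (your $B/C$ step, by contrast, invokes the $A_iB_i^{-1}$ ``already obtained,'' which would not be available). Only then does the paper attack the $A/B$ part, using $C_1B_1^{-1}$ to replace $B_1$ by $C_1$ in $F_1$: setting $F_3=(C_1B_1^{-1})F_1=C_1A_2A_3^{-1}B_4^{-1}$ and $F_4=F_3^{R}(B_1C_1^{-1})=C_2A_3A_4^{-1}C_1^{-1}$, the wrap-around curve $b_1$ has been eliminated, so the conjugator $F_3F_4$ moves only $b_4\mapsto a_4$ and one gets the single difference $F_5F_3^{-1}$ with $F_5=F_3^{F_3F_4}=C_1A_2A_3^{-1}A_4^{-1}$, i.e.\ $A_4B_4^{-1}$ up to inverse. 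So your proposal is repairable, but only by adopting the paper's order: the $B/C$ differences must come first, because in genus $4$ they are the tool that breaks the wrap-around in the $A/B$ chain.
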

\begin{proof}
Let us consider the models for $\Sigma_{4,p}$ as shown in Figure~\ref{EE} and let $\Gamma$ be the subgroup of $\mod(\Sigma_{4,p})$ generated by the elements $\rho_1$, $\rho_2$, $\rho_2F_1$ and $\rho_1E_1$ where $F_1=B_1A_2A_3^{-1}B_4^{-1}$ and $E_1=H_{b-1,b} H_{b+1,b}^{-1}C_{2}C_{3}^{-1}$. Thus it is clear that the elements $R=\rho_1\rho_2$, $F_1=\rho_2 \rho_2 F_1$ and $E_1=\rho_1\rho_1E_1$ belong to the subgroup $\Gamma$.   We have the element
\[
F_2=E_1^{E_1F_1}=H_{b-1,b} H_{b+1,b}^{-1}C_{2}B_{4}^{-1}\in \Gamma.
\]
Thus the subgroup $\Gamma$ contains the elements $F{_2}^{-1} E_1=B_4C_3^{-1}$ and $\rho_1(B_4C_3^{-1})\rho_1=B_2C_2^{-1}$. By conjugating these elements with powers of $R$, we get
\[
B_{i+1}C_i^{-1}\in \Gamma \textrm{ and } B_iC_i^{-1}\in \Gamma
\]
for all $i=1,2,3$.  One can also obtain that the subgroup $\Gamma$ contains the following elements:
\begin{eqnarray*}
F_3&=&(C_1B_1^{-1})F_1=C_1A_2A_3^{-1}B_4^{-1},\\
F_4&=&F_3^{R}(B_1C_1^{-1})=C_2A_3A_4^{-1}B_1^{-1}(B_1C_1^{-1})=C_2A_3A_4^{-1}C_1^{-1}\textrm{ and}\\
F_5&=&F_3^{F_3F_4}=C_1A_2A_3^{-1}A_4^{-1}.
\end{eqnarray*}
Thus we obtain that $F_5F_3^{-1}=A_4B_4^{-1}\in \Gamma$. By the action of $R$, $A_iB_i^{-1}\in \Gamma$ for all $i=1,2,3,4$.
The remaining part of the proof is very similar to that of Lemma~\ref{lemeven}.
\end{proof}

\begin{lemma}\label{lem3}
For $g=3$ and every even $p\geq4$, the group generated by the elements
\[
\rho_1,\rho_2, \rho_1H_{b-1,b} H_{b+1,b}^{-1}A_{2}A_{3}^{-1} 
\textrm{ and }\rho_2A_1B_1C_1C_2^{-1}B_3^{-1}A_3^{-1}
\]
 contains the Dehn twists $A_i$, $B_i$ and $C_i$  for $i=1,2,3$.
\end{lemma}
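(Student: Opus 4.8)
The plan is to run the argument of Lemma~\ref{lem5} with every index shifted down by one, modelling $\Sigma_{3,p}$ as in Figure~\ref{OE}. First I would record what $\Gamma$ contains for free: since $\rho_1,\rho_2\in\Gamma$ are involutions, the products $R=\rho_1\rho_2$, $F_1=\rho_1(\rho_1 F_1)=H_{b-1,b}H_{b+1,b}^{-1}A_2A_3^{-1}$ and $E_1=\rho_2(\rho_2 E_1)=A_1B_1C_1C_2^{-1}B_3^{-1}A_3^{-1}$ all lie in $\Gamma$.

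Next I would produce the ``diagonal'' twist differences. Conjugating $F_1$ by $R^{-1}$ lowers every curve index by one and carries the two half-twist arcs to $\beta_{b,b+1}$ and $\beta_{b+2,b+1}$, so $F_2=F_1^{R^{-1}}=H_{b,b+1}H_{b+2,b+1}^{-1}A_1A_2^{-1}\in\Gamma$. The key geometric input is that $E_1$ realises the chain map $a_1\mapsto b_1\mapsto c_1$ while fixing $a_2$ and the two half-twist arcs; conjugating $F_2$ by $E_1$ and then again by $E_1$ therefore yields $F_3=H_{b,b+1}H_{b+2,b+1}^{-1}B_1A_2^{-1}$ and $F_4=H_{b,b+1}H_{b+2,b+1}^{-1}C_1A_2^{-1}$ in $\Gamma$. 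Cancelling the half-twist factors gives $F_2F_3^{-1}=A_1B_1^{-1}$ and $F_3F_4^{-1}=B_1C_1^{-1}$, and conjugating by powers of $R$ spreads these to $A_iB_i^{-1}$, $B_iC_i^{-1}$, and hence $A_iC_i^{-1}$, in $\Gamma$.

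I would then extract the ``adjacent'' difference. Checking the chain element once more, $E_1(a_2,c_2)=(a_2,b_3)$, so $(A_2C_2^{-1})^{E_1}=A_2B_3^{-1}\in\Gamma$, and the $R$-action gives $A_iB_{i+1}^{-1}\in\Gamma$; together with $A_iC_i^{-1}$ this also produces $C_iB_{i+1}^{-1}\in\Gamma$. At this stage the telescoping identities already used in Lemma~\ref{lemeven},
\[
A_1A_2^{-1}=(A_1B_1^{-1})(B_1C_1^{-1})(C_1B_2^{-1})(B_2A_2^{-1}),\quad B_1B_2^{-1}=(B_1C_1^{-1})(C_1B_2^{-1}),\quad C_1C_2^{-1}=(C_1B_2^{-1})(B_2C_2^{-1}),
\]
place $A_1A_2^{-1}$, $B_1B_2^{-1}$ and $C_1C_2^{-1}$ in $\Gamma$. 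Since $R\in\Gamma$ as well and $g=3\geq3$, Theorem~\ref{thm1} then shows that $\Gamma$ contains all the Dehn twists $A_i$, $B_i$ and $C_i$, as required.

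The part that genuinely needs care, rather than mechanical index shifting, is verifying the two curve-image computations for the small element $E_1=A_1B_1C_1C_2^{-1}B_3^{-1}A_3^{-1}$: that it acts as the expected chain map $a_1\mapsto b_1\mapsto c_1$ and sends $(a_2,c_2)$ to $(a_2,b_3)$, and that the half-twist arcs appearing in $F_2$ remain disjoint from the support of $E_1$ so that the conjugations only permute the $a,b,c$-factors. Because $g=3$ is the smallest genus in this family, I would confirm directly from Figure~\ref{OE} that the chain $a_1,b_1,c_1,c_2,b_3,a_3$ embeds exactly as in the $g=5$ model and does not wrap around the surface; once that is checked, the rest of the argument is word-for-word that of Lemma~\ref{lemeven}.
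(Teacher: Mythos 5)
Your proposal is correct and follows essentially the same route as the paper's own proof: the same elements $F_2=F_1^{R^{-1}}$, $F_3=F_2^{E_1}$, $F_4=F_3^{E_1}$, the same differences $A_1B_1^{-1}$, $B_1C_1^{-1}$, the same use of $(A_2C_2^{-1})^{E_1}=A_2B_3^{-1}$, and the same telescoping into Theorem~\ref{thm1}. The only difference is that you spell out the final reduction and the curve-image verifications that the paper leaves implicit by referring back to Lemma~\ref{lemeven}.
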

\begin{proof}
Consider the models for $\Sigma_{3,p}$ as shown in Figure~\ref{OE}. Let $\Gamma$ be the subgroup of $\mod(\Sigma_{3,p})$ generated by the elements $\rho_1$, $\rho_2$, $\rho_1F_1$ and $\rho_2E_1$ where $F_1=H_{b-1,b} H_{b+1,b}^{-1}A_{2}A_{3}^{-1}$ and $E_1=A_1B_1C_1C_2^{-1}B_3^{-1}A_3^{-1}$. Thus the elements $R=\rho_1\rho_2$, $F_1=\rho_1 \rho_1 F_1$ and $E_1=\rho_2\rho_2E_1$ are contained in the subgroup $\Gamma$. We get the elements
\begin{eqnarray*}
F_2&=&F_1^{R^{-1}}=H_{b,b+1} H_{b+2,b+1}^{-1}A_{1}A_{2}^{-1}\in \Gamma,\\
F_3&=&F_2^{E_1}=H_{b,b+1} H_{b+2,b+1}^{-1}B_{1}A_{2}^{-1}\in \Gamma \textrm{ and}\\
F_4&=&F_3^{E_1}=H_{b,b+1} H_{b+2,b+1}^{-1}C_{1}A_{2}^{-1}\in \Gamma.
\end{eqnarray*}
From these, the subgroup $\Gamma$ contains the elements $F_2F_3^{-1}=A_1B_1^{-1}$ and $F_3F_4^{-1}=B_1C_1^{-1}$, which implies that $A_1C_1^{-1}\in \Gamma$. Hence
\[
A_iB_i^{-1}\in \Gamma, B_jC_j^{-1}\in \Gamma \textrm{ and } A_jC_j^{-1}\in \Gamma
\]
for all $i=1,2,3$ and $j=1,2$, by the action of $R$.  We also have the following element
\[
(A_2C_2^{-1})^{E_1}=A_2B_3^{-1},
\]
which is contained in $\Gamma$. This implies that 
\[
A_iB_{i+1}^{-1}\in \Gamma
\]
for $i=1,2$ by the action of $R$. One can complete the proof as in the proof of Lemma~\ref{lemeven}.
\end{proof}

\begin{remark}\label{podd}
Our results in lemmata~\ref{lemeven}--\ref{lem3}  are also valid for surfaces with odd number of punctures.  To see that our proofs also work 
for such surfaces we refer the reader to Figures~$3$ and $5$ in \cite{apy1}.
\end{remark}

\begin{figure}[hbt!]
\begin{center}
\scalebox{0.3}{\includegraphics{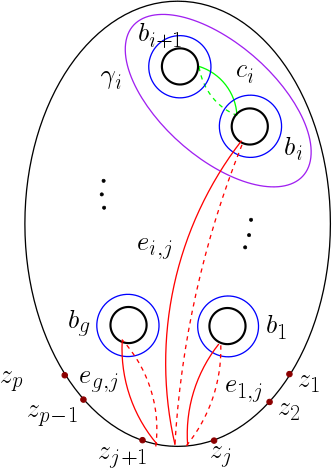}}
\caption{The curves $e_{i,j}$ and $\gamma_i$ on the surface $\Sigma_{g,p}$.}
\label{C}
\end{center}
\end{figure}

Now, in the remainder of the paper let $\Gamma$ be the subgroup of $\mod(\Sigma_{g,p})$
generated by the elements given explicitly in lemmata~\ref{lemeven}--\ref{lem3} with the conditions mentioned in these lemmata.
The proof of the following lemma is similar to that of ~ \cite[Lemma~$4.6$]{apy1}, nevertheless we give a proof for the sake of completeness of the paper.

\begin{lemma}\label{lemma4}
The group $\mod_{0}(\Sigma_{g,p})$ is contained in the group $\Gamma$.
\end{lemma}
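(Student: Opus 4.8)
The plan is to reduce the statement to a known Dehn-twist generating set for $\mod_0(\Sigma_{g,p})$ and then check that every member of that set lies in $\Gamma$. Following the strategy of \cite[Lemma~4.6]{apy1}, we use that $\mod_0(\Sigma_{g,p})$ is generated by the Dehn twists $A_i,B_i,C_i$ about the genus curves together with the Dehn twists about the puncture curves $e_{i,j}$ and $\gamma_i$ of Figure~\ref{C}; the former reflect a Humphries generating set for the underlying closed surface, while the latter furnish the point-pushing subgroup appearing in the Birman sequence. By lemmata~\ref{lemeven}--\ref{lem3} the twists $A_i,B_i,C_i$ already lie in $\Gamma$ for all $i=1,\dots,g$, so the whole task is to produce the twists about $e_{i,j}$ and $\gamma_i$.

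The first step is to extract a half-twist element from the explicit generators. In each of lemmata~\ref{lemeven}--\ref{lem3} one generator has the form $\rho_1 F_1$ with $F_1 = H_{b-1,b}H_{b+1,b}^{-1}\,w$, where $w$ is a word in the $A_i,B_i,C_i$; the proofs there already show $F_1\in\Gamma$. Since $w\in\Gamma$ as well, multiplying by $w^{-1}$ shows that the composite half twist
\[
\mathcal H := H_{b-1,b}H_{b+1,b}^{-1} = F_1\,w^{-1}
\]
lies in $\Gamma$. Conjugating $\mathcal H$ by the powers $R^j$, which cyclically permute the punctures via $R(z_1)=z_p$ and $R(z_i)=z_{i-1}$, yields the analogous half-twist products centred at every triple of consecutive punctures, so all of these lie in $\Gamma$ too.

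The second step converts these half-twist products into the required puncture twists. For a suitably chosen genus curve $c$ lying adjacent to the punctures, the composite $\mathcal H$ drags $c$ across $z_{b-1},z_b,z_{b+1}$, so that $\mathcal H(c)$ is one of the curves $e_{i,j}$; the conjugation relation then gives $\mathcal H\,C\,\mathcal H^{-1}=T_{\mathcal H(c)}=T_{e_{i,j}}\in\Gamma$, and the $R$-conjugates supply every pair of indices $i,j$. The twists $T_{\gamma_i}$ arise in the same manner from an appropriate nonseparating curve encircling a single handle. I expect the main obstacle to be precisely this geometric bookkeeping: one must track how the composite half twist $\mathcal H$ moves the chosen curves through the punctures and confirm that the images are exactly the $e_{i,j}$ and $\gamma_i$ of Figure~\ref{C}, since the braid relations among the $H_{i,j}$ make the action delicate. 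Once all these Dehn twists are shown to lie in $\Gamma$, the generating set above yields $\mod_0(\Sigma_{g,p})\subseteq\Gamma$, as claimed.
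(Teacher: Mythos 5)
Your reduction to a Gervais-type generating set and your extraction of $\mathcal{H} = H_{b-1,b}H_{b+1,b}^{-1}\in\Gamma$ are both fine (the paper performs the same reduction, and it isolates $\mathcal{H}$ inside $\Gamma$ in the proof of Theorem A). The fatal problem is your second step. The product $\mathcal{H}$ is supported in a regular neighbourhood of the two arcs $\beta_{b-1,b}$ and $\beta_{b+1,b}$, i.e.\ in a disk containing only the three punctures $z_{b-1},z_b,z_{b+1}$. In the models of Figures~\ref{EE} and~\ref{OE} every genus curve $a_i,b_i,c_i$ is disjoint from this disk --- this is precisely why the half-twist factors commute with all the $A_i,B_i,C_i$ in the computations of lemmata~\ref{lemeven}--\ref{lem3}. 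Consequently $\mathcal{H}(c)=c$ for every genus curve $c$, so $\mathcal{H}\,C\,\mathcal{H}^{-1}=C$: conjugation by $\mathcal{H}$, or by any of its $R$-conjugates, can never ``drag'' a genus curve across punctures and produces no twist about any $e_{i,j}$ or $\gamma_i$. More generally, a homeomorphism supported in a disk $D$ can alter a curve only inside $D$, so $\mathcal{H}(c)$ could be one of the $e_{i,j}$ only if $c$ already entered $D$, which none of your available curves do. The failure is therefore structural, not a matter of ``geometric bookkeeping.''

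The idea you are missing is that the rotation $R=\rho_1\rho_2$ itself moves a genus curve through the puncture region: in the chosen embedding one has $R(a_g)=e_{1,p-1}$, hence $RA_gR^{-1}=E_{1,p-1}\in\Gamma$ immediately. The paper then uses the elements $\phi_i=B_{i+1}\Gamma_i^{-1}C_iB_i$, which lie in $\Gamma$ by the proof of \cite[Lemma~4.5]{apy1} and which map $e_{i,j}$ to $e_{i+1,j}$: conjugating $E_{1,p-1}$ by $\phi_{g-1}\cdots\phi_2\phi_1$ gives $E_{g,p-1}$, conjugating that by $R$ gives $E_{1,p-2}$, and iterating this staircase yields $E_{1,j}\in\Gamma$ for all $j=1,\ldots,p-1$, which together with the $A_i$, $B_i$, $C_i$ suffices to generate $\mod_0(\Sigma_{g,p})$.
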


\begin{proof}
It follows from the subgroup $\Gamma$ contains the elements $A_i$, $B_i$ and $C_j$ for all $i=1,\ldots,g$ and $j=1,\ldots,g-1$ by lemmata~\ref{lemeven}--\ref{lem3} that
it is sufficient to prove that $\Gamma$ contains the Dehn twists $E_{i.j}$ for some fixed $i$ ($j=1,2,\ldots,p-1$). Let us first note that $\Gamma$ contains $A_{g}$ and $R=\rho_1\rho_2$. Consider the models for $\Sigma_{g,p}$ as shown in Figures~\ref{EE} and~\ref{OE}. By the fact that the diffeomorphism $R$ maps $a_{g}$ to $e_{1,p-1}$, we get
\[
RA_{g}R^{-1}=E_{1,p-1} \in \Gamma.
\]
The diffeomorphism $\phi_{i}=B_{i+1}\Gamma_i^{-1}C_iB_i$ which maps each $e_{i,j}$ to $e_{i+1,j}$ for $j=1,2,\ldots,p-1$ (see Figure~\ref{C}). 
By the proof of~ \cite[Lemma~$4.5$]{apy1}, the group $\Gamma$ contains the element $\phi_{g}$. Thus we have
\[
\phi_{g-1}\cdots \phi_2\phi_1E_{1,p-1}(\phi_{g-1}\cdots \phi_2\phi_1)^{-1}=E_{g,p-1}\in H.
\]
Likewise, the diffeomorphism $R$ sends $e_{g,p-1}$ to $e_{1,p-2}$. Then we obtain
\[
RE_{g,p-1}R^{-1}=E_{1,p-2}\in \Gamma.
\]
It follows from 
\[
\phi_{g-1}\cdots \phi_2\phi_1E_{1,p-2}(\phi_{g-1}\cdots \phi_2\phi_1)^{-1}=E_{g,p-2}\in \Gamma
\]
 that
 \[
 R(E_{g,p-2})R^{-1}=E_{1,p-3}\in \Gamma
 \]
 Continuing in this way, we conclude that the elements $E_{1,1},E_{1,2},$ $\ldots,E_{1,p-1}$ belong to $\Gamma$, which completes the proof.
\end{proof}

\begin{proofa}
Consider the surface $\Sigma_{g,p}$ as in Figures~\ref{EE} and~\ref{OE}.

\underline{ If $g=2k\geq 14$ and $p=2b\geq 10$}: In this case, consider the surface $\Sigma_{g,p}$ as in Figure~\ref{EE}. Since
\[
\rho_1(c_{k-3})=c_{k+4}, \rho_1(b_{k-1})=b_{k+3} \textrm{ and }\rho_1(a_{k})=a_{k
+2},
\]
 we get
\[
\rho_1C_{k-3}\rho_1=C_{k+4},
\rho_1B_{k-1}\rho_1=B_{k+3} \textrm{ and }
\rho_1A_{k}\rho_1=A_{k+2}.
\]
Also, since  $\rho_1H_{b-1,b}\rho_1=H_{b+1,b}$,
it is easy to see that $\rho_1H_{b-1,b}H_{b+1,b}^{-1}C_{k-3}B_{k-1}A_k
A_{k+2}^{-1}B_{k+3}^{-1}C_{k+4}^{-1}$ is an involution. Therefore, the generators of the subgroup $\Gamma$ given in Lemma~\ref{lemeven} are involutions.

\underline{ If $g=2k+1\geq 13$ and $p=2b\geq10$}: In this case, consider the surface $\Sigma_{g,p}$ as in Figure~\ref{OE}. It follows from
\[
\rho_1(c_{k-3})=c_{k+5}, \rho_1(b_{k})=b_{k+3} \textrm{ and }\rho_1(a_{k+1})=a_{k
+2},
\]
 that we have
\[
\rho_1C_{k-3}\rho_1=C_{k+5},
\rho_1B_{k}\rho_1=B_{k+3} \textrm{ and }
\rho_1A_{k+1}\rho_1=A_{k+2}.
\]
Also, by the fact that $\rho_1H_{b-1,b}\rho_1=H_{b+1,b}$,
it is easy to see that the element $\rho_1H_{b-1,b}H_{b+1,b}^{-1}C_{k-3}B_kA_{k+1}A_{k+2}^{-1}B_{k+3}^{-1}C_{k+5}^{-1}$ is an involution. 
We conclude that the generators of the subgroup $\Gamma$ given in Lemma~\ref{lemodd} are involutions.

\underline{ If $g=3,4,5$ or $g=6$ and $p=2b\geq4$}:
It follows from
\begin{itemize}
\item $\rho_2(b_2)=b_5$,  $\rho_2(a_3)=a_4$ and $\rho_1(c_3)=c_4$ if $g=6$,
\item $\rho_1(a_3)=a_4$,  $\rho_2(a_2)=a_4,\rho_2(b_2)=b_4$ and $\rho_2(c_2)=c_3$ if $g=5$,
\item $\rho_2(b_1)=b_4$,  $\rho_2(a_2)=a_3$ and $\rho_1(c_2)=c_3$ if $g=4$ 
\item $\rho_1(a_2)=a_3$,  $\rho_2(a_1)=a_3,\rho_2(b_1)=b_3$ and $\rho_2(c_1)=c_2$ if $g=3$ and
\item $\rho_1H_{b-1,b}\rho_1=H_{b+1,b}$ if $g=3,4,5$ or $g=6$
\end{itemize}
that the following elements:
\begin{itemize}
\item  $\rho_2B_2A_3A_4^{-1}B_5^{-1}$ and $\rho_1H_{b-1,b}H_{b+1,b}^{-1}C_{3}C_{4}^{-1}$ if $g=6$,
\item $\rho_1H_{b-1,b}H_{b+1,b}^{-1}A_{3}A_{4}^{-1}$ and $\rho_2A_2B_2C_2C_3^{-1}B_4^{-1}A_4^{-1}$  if $g=5$,
\item $\rho_2B_1A_2A_3^{-1}B_4^{-1}$ and $\rho_1H_{b-1,b} H_{b+1,b}^{-1}C_{2}C_{3}^{-1}$ if $g=4$ and
\item $\rho_1H_{b-1,b} H_{b+1,b}^{-1}A_{2}A_{3}^{-1}$
 and $\rho_2A_1B_1C_1C_2^{-1}B_3^{-1}A_3^{-1}$ if $g=3$ 
\end{itemize}
given in lemmata~\ref{lem6}--\ref{lem3} are involutions.

Next, we show that the subgroup $\Gamma$ is equal to the mapping class group $\mod(\Sigma_{g,p})$.
By Lemma~\ref{lemma4}, the group $\mod_{0}(\Sigma_{g,p})$ is contained in the group $\Gamma$. Hence, by Lemma~\ref{lemma1}, we need to prove that 
$\Gamma$ is mapped surjectively onto $Sym_p$. The element $\rho_1\rho_2 \in \Gamma$ has the image $(1,2,\ldots,p)\in Sym_p$. 

As proven above,  the Dehn twists $A_i$, $B_i$ and $C_i$ belong to the subgroup $\Gamma$. Thus, it can be easily observed that the factorization of half twists $H_{b-1,b}H_{b+1,b}^{-1}$ are contained in subgroup $\Gamma$. Therefore, the group $\Gamma$ also contains the following element:
\[
R^{b-2}(H_{b-1,b}H_{b+1,b}^{-1})R^{2-b}=H_{1,2}H_{3,2}^{-1},
\]
which has the image $(1,2,3)\in Sym_p$. This completes the proof since the elements $(1,2,\ldots,p)$ and $(1,2,3)$ of $Sym_p$ generate the whole group $Sym_p$ if $p$ is even~\cite[Theorem B]{iz}.

\end{proofa}

When the number of punctures is odd, we introduce an additional involution $\rho_3$ (depicted in Figure~\ref{rho3})  to our generating set.  The main reason behind adding an extra involution is for generating 
the symmetric group $Sym_p$.  We want to point out that aside from generating $Sym_p$, all of our proofs in the case of even number of punctures work for odd number of punctures.
For $\rho_1$ and $\rho_2$, we distribute punctures as in Figures $3$ and $5$ in \cite{apy1} (see also Remark~\ref{podd}).
 
\begin{figure}[hbt!]
\begin{center}
\scalebox{0.35}{\includegraphics{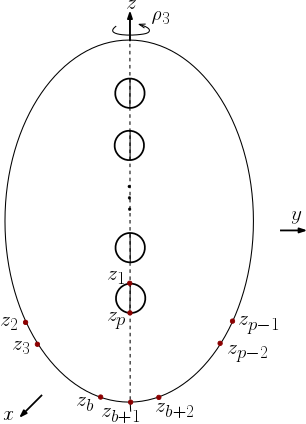}}
\caption{The involution $\rho_3$  on the surface $\Sigma_{g,p}$ for $p=2b+1$.}
\label{rho3}
\end{center}
\end{figure}

\begin{proofb}
For the first part of the proof we show that  
\begin{itemize}
\item [(i)] For every even integer $g=2k\geq14$ and every odd integer $p=2b+1\geq 9$, the subgroup $\mod_{0}(\Sigma_{g,p})$ of $\mod(\Sigma_{g,p})$ generated by the elements 
\[
\rho_1, \rho_2 \textrm{ and }\rho_1H_{b-1,b}H_{b+1,b}^{-1}C_{k-3}B_{k-1}A_k
A_{k+2}^{-1}B_{k+3}^{-1}C_{k+4}^{-1}, \rho_3. 
\]

\item[(ii)] For every odd integer $g=2k+1 \geq15$ and odd integer $p=2b+1 \geq 9$, the subgroup $\mod_{0}(\Sigma_{g,p})$ of $\mod(\Sigma_{g,p})$  generated by the elements
\[
\rho_1,\rho_2 \textrm{ and }\rho_1H_{b-1,b}H_{b+1,b}^{-1}C_{k-3}B_kA_{k+1}A_{k+2}^{-1}B_{k+3}^{-1}C_{k+5}^{-1},  \rho_3.
\]
\end{itemize}
Note that, it is enough to prove that the subgroup generated by the elements above mapped surjectively onto $Sym_p$.  
For this, consider the images of the elements $\rho_1$, $\rho_2$ and $\rho_3$ 
\begin{align*}
&(1, p-1) (2, p-2) \ldots (b, b+1),\\
&(1,p) (2, p-1) \ldots (b, b+2), \\
&(2, p-1) (3, p-2) \ldots (b, b+2).
\end{align*}
This finishes the proof for the first part,since these elements generate $Sym_p$, see \cite[Lemma 6]{m1}.  For the second part of the theorem, note that adding $\rho_3$ to the corresponding 
generating set given in Theorem A, finishes the proof.
\end{proofb}

As a last observation, one can prove that Theorem~A also holds  for the cases $p=2$ or $p=3$.
In theses cases, the generating set of $\Gamma$ can be chosen as 
\[
\begin{array}{lll}
\lbrace \rho_1,\rho_2,\rho_1C_{k-3}B_{k-1}A_k
A_{k+2}^{-1}B_{k+3}^{-1}C_{k+4}^{-1} \rbrace & \textrm{if} & g=2k\geq14,\\
\lbrace \rho_1,\rho_2,\rho_1C_{k-3}B_kA_{k+1}A_{k+2}^{-1}B_{k+3}^{-1}C_{k+5}^{-1} \rbrace & \textrm{if} & g=2k+1\geq13.\\
\lbrace \rho_1,\rho_2,\rho_2B_2A_3A_4^{-1}B_5^{-1},\rho_1C_{3}C_{4}^{-1} \rbrace & \textrm{if} & g=6.\\
\lbrace \rho_1,\rho_2,\rho_1A_{3}A_{4}^{-1},\rho_2A_2B_2C_2C_3^{-1}B_4^{-1}A_4^{-1} \rbrace & \textrm{if} & g=5.\\
\lbrace \rho_1,\rho_2,\rho_2B_1A_2A_3^{-1}B_4^{-1},\rho_1C_{2}C_{3}^{-1} \rbrace & \textrm{if} & g=4.\\
\lbrace \rho_1,\rho_2,\rho_1A_{2}A_{3}^{-1},\rho_2A_1B_1C_1C_2^{-1}B_3^{-1}A_3^{-1} \rbrace & \textrm{if} & g=3.\\
\end{array}.
\]
It can be easily proven that the group $\Gamma$ contains $\mod_{0}(\Sigma_{g,p})$ by the similar arguments in the proofs of  lemmata~\ref{lemeven}--\ref{lem3}. 
The element $\rho_1\rho_2 \in \Gamma$ has the image $(1,2,\ldots,p)\in Sym_p$. Hence, this element generates $Sym_p$ for $p=2$. If $p=3$, we distribute the punctures 
as in ~ \cite[Figure~$1$]{ka}. Then the element $\rho_1$ has the image $(1,3)$, which generate $Sym_p$ together with the element $(1,2,3)$. Therefore, the group $\Gamma$ 
is mapped surjectively onto $Sym_p$ for $p=2,3$. One can conclude that the group $\Gamma$ is equal to $\mod(\Sigma_{g,p})$.


\end{document}